\newcommand{\SL}{{\mathrm{SL}}}
\theoremstyle{definition}
\newtheorem{deff}[equation]{Definition}
\theoremstyle{plain}
\newtheorem{lemma}{Lemma}
\newtheorem{prop}{Proposition}
\newtheorem{thm}{Theorem}
\newtheorem{cor}{Corollary}
\newtheorem{conjecture}{Conjecture}
\begin{document}


\title[]{On Hecke algebras and simple supercuspidal representations for $Sp(4,F)$}

\author{Moshe Adrian}

\maketitle

\begin{abstract}
A well known result of Borel says that the category of modules over the Iwahori-Hecke algebra of a semisimple $p$-adic group $G$ describes the Bernstein component associated to the unramified principal series of $G$.  We consider Bernstein components for $Sp(4,F)$ associated to principal series induced from simple supercuspidal representations.
\end{abstract}

\section{Introduction}
(NOTE: This paper is almost complete.  It is in the final stages of preparation.  The final version will be posted shortly.)

Let $G$ denote a split connected reductive group over $R$, where $R$ is the ring of integers of a $p$-adic field $F$.  Let $T$ denote an $F$-split maximal torus in $G$ defined over $R$.  We briefly recall the notion of a Bernstein component.

\begin{deff}
A \emph{cuspidal pair} $(L, \sigma)$ consists of an $F$-Levi subgroup $L$ of $G$, together with a supercuspidal representation $\sigma$ of $L(F)$.  We say that the cuspidal pairs $(L_1, \sigma_1)$ and $(L_2, \sigma_2)$ are \emph{inertially equivalent} if there exists $g \in G(F)$ and $\chi \in X^{\mathrm{un}}(L_2)$ such that $gL_1 g^{-1} = L_2$ and ${}^g \sigma_1 \otimes \chi = \sigma_2$, where ${}^g \sigma_1(x) := \sigma_1(g^{-1} x g)$ and where $X^{\mathrm{un}}(L_2)$ denotes the group of unramified characters of $L_2(F)$.
\end{deff}

If $L$ is an $F$-Levi subgroup of $G$, let $P = LN$ denote the associated $F$-parabolic, $N$ the unipotent radical.  Let $\mathfrak{s} = [L, \sigma]_G$ denote the inertial equivalence class of $(L,\sigma)$.  Let $\mathcal{R}_{\mathfrak{s}}(G)$ denote the category of all representations $\pi$ each of whose irreducible subquotients are subquotients of some $i_P^G(\sigma \chi)$, for some $\chi \in X^{\mathrm{un}}(L)$, where $i_P^G$ denotes normalized parabolic induction.

The category $\mathcal{R}_{\mathfrak{s}}(G)$ is called the \emph{Bernstein component} associated to $\mathfrak{s}$. The prevailing philosophy is that Bernstein components should be equivalent to the category of modules over some appropriate Hecke algebra.  This has been proven in some general cases, the most famous of which is the theorem of Borel (see \cite{borel}), which says that the category of unramified principal series is equivalent to the category of modules over the Iwahori Hecke algebra.  Morris and Roche (\cite{morris} and \cite{roche}) have important results in this area as well.

Let $G$ be a simply connected, almost simple, split connected reductive algebraic group defined over $F$. Recently, Gross and Reeder \cite{grossreeder} have discovered a new class of supercuspidal representations of $G(F)$, called \emph{simple supercuspidal representations}.  In this paper we determine the Hecke algebras describing the Bernstein components when $\sigma$ is a simple supercuspidal representation of a Levi in $Sp(4,F)$.  We now recall the definition of simple supercuspidal representations, since their definition motivates our construction of their associated Hecke algebras.

Let $\mathcal{A}_G$ be the apartment associated to the split torus $T$ of $G$. Let $\Phi$ denote the set of roots of $T$ in $G$ and $\Psi$ the set of affine roots.
Fix a Chevalley basis in the Lie algebra of $G$.  To each $\psi \in \Psi$ we have an associated affine root group $U_{\psi} \cong R$.
Fix an alcove $C$ in the apartment with corresponding simple and positive affine roots $\Pi \subset \Psi^+$.  To $C$ we have an associated Iwahori subgoup $I \subset G(F)$.  Let $x_0$ be the unique point in the alcove $C$ on which all simple affine roots of $G$ take the same value.  Their common value is $1/h$, where $h$ is the Coxeter number of $G$.  Let $G(F)_{z,r}$ denote the Moy-Prasad subgroup of $G(F)$ at the point $z$, with level $r$.  Set
$$I^+ = G(F)_{x_0, 1/h} \ \ \mathrm{and} \ \ I^{++} = G(F)_{x_0, 1/h^+}$$
Then
$$I^+ / I^{++} \cong \displaystyle\bigoplus_{\psi \in \Pi} U_{\psi} / U_{\psi+1}$$ (see \cite{grossreeder}).
Let $Z$ be the center of $G(F)$.  A character $\chi$ of $Z I^+$ is \emph{affine generic} if $\chi$ is trivial on $I^{++}$ but nontrivial on each $U_{\psi} / U_{\psi+1}$ appearing in the above decomposition of $I^+ / I^{++}$.

\begin{thm}{(Gross/Reeder, \cite{grossreeder})}\label{grossreedertheorem}
Let $\chi : Z I^+ \rightarrow \mathbb{C}^*$ be an affine generic character. Then $cInd_{Z I^+}^{G(F)} \chi$ is a supercuspidal representation, where $cInd$ denotes compact induction.
\end{thm}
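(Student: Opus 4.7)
The plan is to apply the standard two-part argument for constructing irreducible supercuspidal representations via compact induction from an open subgroup $H\subset G(F)$ with $H/Z$ compact. Take $H:=ZI^+$ and $\pi:=cInd_H^{G(F)}\chi$. Since $H/Z$ is compact, $\pi$ is admissible, and every one of its matrix coefficients $g\mapsto\langle\pi(g)f_1,f_2\rangle$ has support contained in $\supp(f_1)\cdot H\cdot\supp(f_2)^{-1}$, which is compact modulo $Z$. Consequently every irreducible subquotient of $\pi$ has matrix coefficients compactly supported modulo $Z$, so is supercuspidal. The theorem then reduces to proving $\pi$ itself is irreducible.

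For irreducibility I appeal to Mackey's criterion: $\pi$ is irreducible if and only if for every $g\in G(F)\setminus H$ the characters $\chi$ and ${}^g\chi$ differ somewhere on $H\cap gHg^{-1}$. Using the Iwahori--Bruhat decomposition $G(F)=\bigsqcup_{\widetilde{w}\in\widetilde{W}} I^+\dot{\widetilde{w}}I^+$, where $\widetilde{W}=W_{\mathrm{aff}}\rtimes\Omega$ is the extended affine Weyl group and $\Omega$ is the stabilizer of the alcove $C$, together with the bi-$I^+$-invariance of $\chi$ modulo $Z$, this reduces the problem to checking the non-intertwining condition on a set of representatives $\dot{\widetilde{w}}$ whose image in $\widetilde{W}$ is nontrivial.

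For $\widetilde{w}\notin\Omega$, a standard length-function argument in $W_{\mathrm{aff}}$ produces a simple affine root $\psi\in\Pi$ with $\widetilde{w}^{-1}\psi$ of affine height at least $2$; then conjugation by $\dot{\widetilde{w}}$ sends $U_\psi$ into $U_{\widetilde{w}^{-1}\psi}\subset I^{++}$, on which $\chi$ is trivial. Since $\chi$ is nontrivial on $U_\psi/U_{\psi+1}$ by affine genericity, the two characters disagree on $U_\psi$ and the intertwining fails. The main obstacle is the residual case $\widetilde{w}\in\Omega$: such $\dot{\widetilde{w}}$ normalizes $H$, so $H\cap\dot{\widetilde{w}}H\dot{\widetilde{w}}^{-1}=H$, and the non-intertwining condition becomes the requirement that $\chi|_{I^+/I^{++}}$ is not fixed by the induced $\Omega$-permutation of the factors $\bigoplus_{\psi\in\Pi} U_\psi/U_{\psi+1}$. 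This must be handled by explicit analysis of how $\Omega$ acts on simple affine roots of the given simply connected almost simple group; in the subcases where some nontrivial element of $\Omega$ stabilizes $\chi$, one expects $\pi$ to decompose into $|\mathrm{Stab}_\Omega(\chi)|$ irreducible pieces indexed by characters of the stabilizer, each still supercuspidal by the matrix-coefficient argument above.
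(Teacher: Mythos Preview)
The paper does not prove this theorem; it is quoted from Gross--Reeder \cite{grossreeder} and used as input. So there is no ``paper's own proof'' to compare against, only the original Gross--Reeder argument, and your sketch follows that argument closely: compact induction from an open compact-mod-center subgroup gives matrix coefficients of compact support modulo $Z$, and irreducibility is checked via Mackey's criterion using the decomposition $G(F)=I^+ N I^+$.

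There is, however, a genuine gap in your write-up. You correctly isolate the dangerous case $\widetilde{w}\in\Omega$, but then leave it open, saying it ``must be handled by explicit analysis'' and even suggesting that $\pi$ might decompose when $\Omega$ fixes $\chi$. You have overlooked the hypothesis: $G$ is assumed \emph{simply connected}. For a simply connected almost simple group the cocharacter lattice $X_*(T)$ coincides with the coroot lattice $Q^\vee$, so the extended affine Weyl group equals the affine Weyl group and $\Omega$ is trivial. Hence the ``residual case'' never occurs, the Mackey obstruction vanishes for every nontrivial $w$, and $\pi$ is irreducible outright. Once you insert this observation your argument is complete (modulo the routine verification that for each $1\neq w\in W_{\mathrm{aff}}$ some simple affine root $\psi$ has $w^{-1}\psi$ positive but not simple, or the symmetric statement with $w$ in place of $w^{-1}$; one of the two always holds since $w$ cannot permute $\Pi$). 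Your final sentence about $\pi$ splitting into $|\mathrm{Stab}_\Omega(\chi)|$ pieces is the correct picture for groups that are not simply connected, but it is irrelevant here and should be deleted.
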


Gross and Reeder have given the name \emph{simple supercuspidal representations} to these representations.  In this paper we initiate the study of Bernstein components when $\mathfrak{s} = [L, \sigma]_G$, with $\sigma$ a simple supercuspidal representation.  We first note that one can define simple supercuspidal representations for $GL(n,F)$ in analogy with how they are defined in Theorem \ref{grossreedertheorem}.  The main result of this paper is the following theorem.

\begin{thm}\label{adriansavin2} Suppose $p \neq 2$.

\

(1) Let $G(F) = Sp(4,F)$ and $L(F) = SL(2,F) \times GL(1,F)$.  Suppose $\sigma'$ is a simple supercuspidal representation of $SL(2,F)$, and $\mu$ a nontrivial quadratic character of $GL(1,F)$.  Let $\sigma = \sigma' \times \mu$ and set $\mathfrak{s} = [L,\sigma]_G$.  The category $\mathcal{R}_{\mathfrak{s}}(G)$ is equivalent to the category of modules over the Iwahori Hecke algebra of $SL(2,F)$.

(2) Let $G(F) = Sp(4,F)$ and $L(F) = GL(2,F)$.  Suppose $\sigma$ is a simple supercuspidal representation of $GL(2,F)$ and set $\mathfrak{s} = [L, \sigma]_G$.  The category $\mathcal{R}_{\mathfrak{s}}(G)$ is equivalent to the category of modules over the Iwahori Hecke algebra of $PGL(2,F)$.
\end{thm}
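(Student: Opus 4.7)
My plan follows the Bushnell-Kutzko theory of types and covers. For each inertial class $\mathfrak{s}=[L,\sigma]_G$ in the theorem, the strategy is: (i) construct an $\mathfrak{s}_L$-type $(J_L,\lambda_L)$ for $\sigma$ on $L$; (ii) extend it to a $G$-cover $(J,\lambda)$ of $(J_L,\lambda_L)$; and (iii) compute the Hecke algebra $\calH(G,\lambda)$, identifying it with the Iwahori Hecke algebra of $SL(2,F)$ or $PGL(2,F)$. By the main theorem of Bushnell-Kutzko, once such a cover is obtained, one automatically gets that $\calR_\mathfrak{s}(G)$ is equivalent to the category of modules over $\calH(G,\lambda)$.

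For the Levi types, the natural choice is dictated by the simple supercuspidal data itself. On $SL(2,F)$ (respectively $GL(2,F)$), a type for the simple supercuspidal is $(Z_L I_L^+,\chi_L)$, where $I_L^+$ is the pro-$p$ unipotent radical of an Iwahori of $L$ and $\chi_L$ is the affine generic character of Theorem \ref{grossreedertheorem}. In case (1) one then tensors with $\mu$ on the $GL(1,F)$ factor. For the $G$-cover I would take $J$ to be built from $I_G^+$ adjusted by the central character of $\sigma$, with $\lambda$ the extension of $\chi_L$ that is trivial on the root groups of the unipotent radical of the chosen parabolic. The Iwahori decomposition $J=(J\cap N^-)(J\cap L)(J\cap N)$ follows from the affine root structure at the barycenter $x_0$, and the Bushnell-Kutzko cover axioms can then be verified directly.

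The heart of the argument is the computation of $\calH(G,\lambda)$. Its support is indexed by the stabilizer of $[L,\sigma]_L$ inside the extended affine Weyl group attached to the relative root system $\Phi(G,L)$. In case (1), the relative Weyl group $N_G(L)/L$ has order two, and a brief calculation using $\mu=\mu^{-1}$ shows that its nontrivial element fixes $[L,\sigma]_L$. This yields a second simple affine reflection, and combined with the translations coming from $X^{\mathrm{un}}(L)\cong\bbC^*$ (the $GL(1,F)$ factor), recovers the affine Weyl group of $SL(2,F)$ with equal parameters. In case (2) the relative Weyl group is again of order two and preserves the inertial class of $\sigma$, but now $L=GL(2,F)$ contributes a lattice of unramified twists strictly larger than the relevant coroot lattice. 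This produces the \emph{extended} affine Weyl group of $PGL(2,F)$, together with the length-zero generator which distinguishes its Iwahori Hecke algebra from that of $SL(2,F)$.

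The main obstacle is verifying the quadratic relations among the generators of $\calH(G,\lambda)$ and identifying the parameters $q_0,q_1$. This requires explicit intertwining calculations: for each affine simple reflection $s$, one analyzes $\Hom_J(\lambda,\Ind_{J\cap sJs^{-1}}^J\lambda^s)$ and extracts the relation from the product $[JsJ]*[JsJ]$ in $\calH(G,\lambda)$. One must carefully track how the affine generic character restricts to the relevant Moy-Prasad subgroups after conjugation by each Weyl element, and in case (2) particular attention is needed for the interaction of the Siegel reflection with the central character of the simple supercuspidal. Once these relations are in hand, comparison with the standard presentations of the Iwahori Hecke algebras of $SL(2,F)$ and $PGL(2,F)$ completes the identification.
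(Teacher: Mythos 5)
Your overall architecture (an $\mathfrak{s}_L$-type on $L$, a $G$-cover, a Hecke algebra identification, then the Bushnell--Kutzko equivalence) is compatible in spirit with what the paper intends, but as written you have deferred exactly the steps that constitute the paper's proof, and the few concrete choices you do make are not quite right. The paper does not work with a group "built from $I_G^+$": it takes the Moy--Prasad group $K^+=G(F)_{p,1/2}$ and $K^{++}=G(F)_{p,1/2^+}$ at the special point $p=(1/4,1/4)$ where the simple affine roots of $L$ take a common value (Conditions (1) and (2) of Section \ref{preliminaries}), extends the affine generic character canonically using the decomposition of $K^+/K^{++}$, and then enlarges $K^+$ by a torus part ($K=T_\psi K^+$, resp.\ $T_\alpha K^+$) carrying a quadratic character $\mu$. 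Your proposed Levi type $(Z_LI_L^+,\chi_L)$ is defective in case (2): for $L=GL(2,F)$ the group $Z_LI_L^+$ is not compact, and since it pins down the central character at the uniformizer it is not contained in most unramified twists of $\sigma$, hence is not an $\mathfrak{s}_L$-type; one must pass to a compact open group and then decide precisely which torus elements and which character to adjoin --- this is where the paper's $T_\psi$, $T_\alpha$ and the quadratic $\mu$ enter, and it is not a cosmetic adjustment.

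The decisive content is the support computation and the quadratic relations, and your proposal only names these as "the main obstacle." The paper proves that the support is $K^+N_\psi K^+$ by a case-by-case commutator analysis on representatives $x_\alpha n y_\alpha$, computes a length function via galleries to count cosets, and evaluates the convolutions explicitly; Gauss sums $G_\delta(\mu,\psi)$ appear, and the relation $e_\delta^2=q+(q-1)e_\delta$ is obtained only after the identity $G_\delta(\mu,\psi)^2=\mu(h_\delta(-1))\,q$ together with a trivial/nontrivial dichotomy for the restriction of $\mu$; in the other branch one gets $f_\delta*f_\delta=q^2 1_\chi$ with no linear term, i.e.\ a genuinely different algebra (compare the paper's conjecture allowing $\mathbb{C}[\mathbb{Z}]$). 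So "equal parameters recovering the Iwahori Hecke algebra of $SL(2,F)$" cannot be read off from the shape of $N_G(L)/L$ and $X^{\mathrm{un}}(L)$; it is the outcome of these character and Gauss-sum computations. Likewise in case (2), your assertion that the nontrivial relative Weyl element preserves $[L,\sigma]_L$ for a simple supercuspidal of $GL(2,F)$ needs an argument, and the length-zero involution you invoke corresponds to the paper's $f_\alpha$ with $f_\alpha*f_\alpha=\mu(h_\alpha(-1))1_\chi$, whose involutivity again hinges on the value of a quadratic character attached to the actual supercuspidal datum. Finally, the cover axioms themselves (in particular producing an invertible Hecke algebra element supported on a strongly positive central element of $L$) are dismissed with "can then be verified directly." In short, the plan is reasonable, but the gaps are precisely the support theorem, the coset counts, and the Gauss-sum relations that the paper carries out in detail, plus a correct specification of the pair $(J,\lambda)$.
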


The main part of our proof of this theorem is the construction of a canonical Hecke algebra $\mathcal{H}(K, \phi)$ associated to $[L,\sigma]_G$, which we showed is isomorphic to the Iwahori-Hecke algebra of $SL(2,F)$ or $PGL(2,F)$.  In fact our method is very general, and we expect to prove analogous results for more general connected reductive groups, as we now describe.

Again let $G$ be a simply connected, almost simple, split connected reductive group.  Let $L$ be a Levi of $G$, and let $\sigma = cInd_{Z_L I_L^+}^{L(F)}(\chi)$ be a simple supercuspidal representation of $L(F)$, where $I_L^+$ denotes the pro-unipotent part of an Iwahori subgroup of $L(F)$, and $Z_L$ denotes the center of $L(F)$.  We now construct a canonical Hecke algebra $\mathcal{H}(K, \phi)$ associated to $\mathfrak{s} = [L,\sigma]_G$.

In analogy with how $I^+, I^{++}$ were defined above, we first consider the subset in $\mathcal{A}_G$ on which all simple affine roots in $L$ (not in $G$) take the same value, say $1/m$.  Let $p$ be a point on this subset satisfying the condition $$\psi(p) \in \frac{1}{m} \mathbb{Z} \ \ \forall \psi \in \Psi.$$
Define $$K^+ = G(F)_{p, 1/m}, \ \ \ K^{++} = G(F)_{p, 1/{m}^+}$$ In section \ref{preliminaries}, we show
\begin{lemma}
The subgroup $K^{++}$ is normal in $K^+$, with quotient $$K^+ / K^{++} \cong \displaystyle\bigoplus_{\substack{\psi \in \Psi : \\ \psi(p) = 1/m}} U_{\psi} / U_{\psi + 1}$$
\end{lemma}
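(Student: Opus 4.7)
The lemma is a direct consequence of the structure theory of Moy--Prasad filtration subgroups together with the arithmetic hypothesis $\psi(p) \in \frac{1}{m}\mathbb{Z}$ for all $\psi \in \Psi$. The plan is to identify which affine root groups show up in $K^+$ versus $K^{++}$ via the affine-root description of $G(F)_{p,r}$, and then to deduce both normality and the direct sum decomposition from the standard Moy--Prasad commutator estimates.

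First I would recall that for $r > 0$ and a fixed convex order on affine roots, the product map
\begin{equation*}
T(F)_r \times \prod_{\psi \in \Psi,\; \psi(p) \geq r} U_\psi \longrightarrow G(F)_{p,r}
\end{equation*}
is a bijection, where $T(F)_r$ denotes the filtration of the split torus and jumps only at integer values of $r$. The hypothesis $\psi(p) \in \frac{1}{m}\mathbb{Z}$ places $\{\psi(p) : \psi \in \Psi\}$ inside $\frac{1}{m}\mathbb{Z}$, so no affine root value on $p$ lies strictly between $1/m$ and $2/m$. Combined with the torus remark (and the fact that $1/m$ is not an integer in the cases of interest), this yields
\begin{equation*}
K^{++} = T(F)_{1/m^+}\cdot \prod_{\psi:\; \psi(p) \geq 2/m} U_\psi, \qquad K^+ = K^{++}\cdot \prod_{\psi:\; \psi(p) = 1/m} U_\psi.
\end{equation*}

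Normality of $K^{++}$ in $K^+$ is then immediate from the Moy--Prasad commutator relation $[G(F)_{p,r}, G(F)_{p,s}] \subseteq G(F)_{p,r+s}$ applied with $r = 1/m$ and $s = 1/m^+$: the commutator sits inside $G(F)_{p,2/m} \subseteq K^{++}$. The same relation applied to pairs $U_\psi, U_{\psi'}$ with $\psi(p) = \psi'(p) = 1/m$ forces $[U_\psi,U_{\psi'}] \subseteq K^{++}$, because every resulting affine root $a\psi + b\psi'$ with $a,b > 0$ takes value at least $2/m$ at $p$. Combined with the factorization above, this shows that $K^+/K^{++}$ is abelian and splits canonically as $\bigoplus_{\psi(p) = 1/m} U_\psi/(U_\psi \cap K^{++})$, where the identification $U_\psi \cap K^{++} = U_{\psi+1}$ uses that the next affine function in the direction of $\psi$ taking a strictly larger value on $p$ is $\psi + 1$, thanks again to the integrality assumption. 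The only piece of genuine care is this combinatorial bookkeeping: verifying that the integrality condition on $p$ prevents any stray affine root group from contributing to the quotient, and that the split-torus factor is unchanged between the two levels. Once this is checked, both assertions of the lemma fall out of the standard commutator bounds.
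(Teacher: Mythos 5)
Your proposal is correct and takes essentially the same route as the paper, which likewise deduces both claims from the commutation rules for affine root groups, noting that $[K^+,K^+] \subset \prod_{i,j\geq 1} T(R)^+\, U_{i\gamma+j\gamma'} \subset K^{++}$ because $(i\gamma+j\gamma')(p) \geq 2/m > 1/m$. Your Moy--Prasad packaging (the product decomposition of $G(F)_{p,r}$ plus $[G(F)_{p,r},G(F)_{p,s}]\subseteq G(F)_{p,r+s}$) is just a more explicit version of the paper's brief argument, and it usefully spells out the identification $U_\psi \cap K^{++}=U_{\psi+1}$ and the unchanged torus factor, which the paper leaves implicit.
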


Since $\psi(p) = 1/m$ for all simple affine roots $\psi$ in the Levi $L$, the upshot is that an affine generic character $\chi$ on $ZI_L^+ \subset L(F)$ extends canonically to a character, denoted $\phi$, on $K^+ \subset G(F)$.  This gives us a canonical Hecke algebra $\mathcal{H}(K^+, \phi)$, which is the space of complex-valued functions $f$ on $G(F)$ such that $f(k_1 g k_2) = \phi(k_1) f(g) \phi(k_2) \ \forall k_1,k_2 \in K^+, \ \forall g \in G(F)$, and such that $f$ vanishes off a finite union of double cosets $K^+ g K^+$.  We can make the following conjecture.

\begin{conjecture}
Let $G$ be a split, almost simple, simply connected, connected reductive group.  Let  $L$ be a maximal Levi in $G$, and $\sigma$ a simple supercuspidal representation of $L(F)$.  Set $\mathfrak{s} = [L, \sigma]_G$.  The category $\mathcal{R}_{\mathfrak{s}}(G)$ is equivalent to the category of modules over either an affine Hecke algebra of $SL(2,F)$ with parameters, or the group algebra $\mathbb{C}[\mathbb{Z}]$.
\end{conjecture}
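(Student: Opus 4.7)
The plan, following the template established for $Sp(4,F)$, is to prove that $(K^+,\phi)$ is an $\mathfrak{s}$-type in the Bushnell--Kutzko sense and then to identify the resulting Hecke algebra $\mathcal{H}(K^+,\phi)$ structurally. First I would verify that $(K^+ \cap L(F), \phi|_{K^+ \cap L(F)})$ coincides with $(Z_L I_L^+, \chi)$, which is tautologically an $\mathfrak{s}_L$-type for $\mathfrak{s}_L = [L,\sigma]_L$ since $\sigma = cInd_{Z_L I_L^+}^{L(F)}\chi$. This reduces the problem to showing that $(K^+,\phi)$ is a $G$-cover of the $L$-type $(Z_L I_L^+, \chi)$, which amounts to checking (a) an Iwahori-type decomposition $K^+ = (K^+ \cap \bar N)(K^+ \cap L(F))(K^+ \cap N)$ with respect to $P = LN$, (b) triviality of $\phi$ on $K^+ \cap N$ and $K^+ \cap \bar N$, and (c) the existence of a strongly $(P,K^+)$-positive invertible element in $\mathcal{H}(K^+,\phi)$. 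Parts (a) and (b) should follow from Moy--Prasad theory and the choice of $p$ on the wall where all simple affine roots of $L$ take value $1/m$: by the integrality condition $\psi(p) \in \frac{1}{m}\mathbb{Z}$ imposed on all affine roots, the unipotent pieces of $K^+$ transverse to $L$ lie in $K^{++}$, so $\phi$ is automatically trivial on them.

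Once $(K^+,\phi)$ is established as a cover, Bushnell--Kutzko theory yields the equivalence $\mathcal{R}_{\mathfrak{s}}(G) \simeq \mathcal{H}(K^+,\phi)\text{-Mod}$. I would then determine the support of $\mathcal{H}(K^+,\phi)$ via the intertwining set $\{g \in G(F) : {}^g\phi = \phi \text{ on } K^+ \cap g^{-1}K^+g\}$. Since $L$ is a maximal Levi, the relative Weyl group $W(G,L) = N_G(L)/L$ has order at most two; combined with the rank-one lattice of unramified twists of $\sigma$, the support is contained in a rank-one extended affine Weyl group. A case analysis then follows: if the nontrivial element of $W(G,L)$ does not preserve the inertial orbit of $\sigma$, the support collapses to $\mathbb{Z}$ and $\mathcal{H}(K^+,\phi) \cong \mathbb{C}[\mathbb{Z}]$; otherwise one obtains two Hecke generators, one for an affine simple reflection and one for a finite simple reflection, satisfying quadratic relations whose parameters determine an affine Hecke algebra of $SL(2,F)$ with parameters.

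The main obstacle is computing the quadratic parameters in the nontrivial case. Whereas the support analysis reduces to Bruhat--Tits combinatorics and Moy--Prasad filtrations, the eigenvalues of the Hecke generators are encoded by dimensions of intertwining spaces between twisted restrictions of $\sigma$, and these require explicit Mackey-theoretic computations that depend intricately on the root datum of $(G,L)$; one must verify case-by-case that the resulting parameters in fact fall within the affine Hecke algebra family rather than yielding a more exotic algebra. A secondary technical obstacle lies in part (c) of the cover verification: constructing the strongly positive invertible element typically requires comparing $(K^+,\phi)$ to types attached to adjacent facets of the apartment and exploiting intertwining operator formulas in the spirit of Morris and Roche. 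Conceptually, the dichotomy in the conjecture mirrors the classical split between ramified and unramified reducibility of parabolic induction, so a clean proof should ultimately rest on understanding when the simple supercuspidal $\sigma$ is fixed, up to unramified twist, by the nontrivial element of $W(G,L)$.
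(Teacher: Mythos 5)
The statement you are addressing is stated in the paper as a \emph{conjecture}: the paper does not prove it, and only establishes the two $Sp(4,F)$ cases of Theorem \ref{adriansavin2} by direct computation (explicit determination of the support of $H_\psi$ via affine-root combinatorics, a length/volume argument, and quadratic relations computed through Gauss sums), not by the Bushnell--Kutzko cover machinery you invoke. So what you have written cannot be compared to a proof in the paper; it can only be judged as a proposed strategy for the open conjecture, and as such it is a plausible outline (in the spirit of Morris and Roche) but not a proof. You yourself concede the two hardest steps: the construction of a strongly $(P,K^+)$-positive invertible element needed for the cover property, and the case-by-case computation of the quadratic parameters. These are precisely where all the content lies, and in the paper's two worked examples they occupy the bulk of the argument (the support proposition, Proposition \ref{product}, and the Gauss-sum evaluation of $f_\delta * f_\delta$, resp. $f_n * f_n$).

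There is also a concrete incorrect claim in your step (b). You assert that, because of the integrality condition $\psi(p)\in\frac1m\mathbb{Z}$, ``the unipotent pieces of $K^+$ transverse to $L$ lie in $K^{++}$, so $\phi$ is automatically trivial on them.'' This contradicts Lemma \ref{Kplusdecomposition} and the paper's own computations: for the Levi $SL(2)\times GL(1)$ in $Sp(4)$ one has
\[
K^+/K^{++} \cong U_{\beta}/U_{\beta+1} \times U_{1-\beta}/U_{2-\beta} \times U_{\delta}/U_{\delta+1} \times U_{1-\delta}/U_{2-\delta} \times U_{\alpha+\beta}/U_{\alpha+\beta+1} \times U_{1-\alpha-\beta}/U_{2-\alpha-\beta},
\]
so the root groups transverse to $L$ (those for $\delta$, $1-\delta$, $\alpha+\beta$, $1-\alpha-\beta$) survive in the quotient; the extension $\phi$ is trivial on them by \emph{choice}, not automatically, and one must then prove (as the paper does laboriously in the support propositions) that this choice intertwines correctly. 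Without repairing this, your reduction of (a)--(b) to ``Moy--Prasad theory'' fails, and the subsequent dichotomy argument (trivial versus nontrivial action of the relative Weyl group $N_G(L)/L$ on the inertial class) remains an unproven heuristic rather than a derivation of the conjecture.
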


\section{Preliminaries}\label{preliminaries}

  Let $E$ be a real vector space, and  let $\Phi\subseteq E^*$ be a root system. Then for every $\alpha\in \Phi$
  and $k\in \mathbb Z$, $\alpha+k$ is an affine functional on $E$. These functionals are called affine roots. Let $\Psi$ denote the set of affine roots.
  For every affine root $\gamma$ let  $H_{\gamma}$ be the hyper plane where $\gamma$ vanishes.
   The affine Weyl group is the group generated by
  reflections about the hyperplanes  $H_{\gamma}$.  Connected components of $E\setminus \cup_{\gamma} H_{\gamma}$
  are called alcoves. The affine Weyl group acts simply transitively on the alcoves.
  Fix an alcove $\sigma$. An affine root $\gamma$ is positive if $\gamma$ is positive on $\sigma$.
 A positive affine root is simple if $H_{\gamma}$ defines a co dimension 1 facet of $\sigma$.  Let $\Pi$ denote the set of simple affine roots.

 Let $F$ be a $p$-adic field, $R$ its ring of integers and $\pi$ the uniformizing element.
 Let $G$ be a split, simply connected Chevalley group over $F$ corresponding to the root system $\Phi$.
 Let $T$ be a maximal split torus. Let $T(R)$ its maximal compact subgroup, and $T(R)^+$ the maximal
 pro-$p$ subgroup of $T(R)$.
Let $N$ be the normalizer of $T$ in $G$. Recall that $N/T(R)$ is isomorphic to the affine
Weyl group $W$.
 For every affine root $\gamma$ we have a subgroup $U_{\gamma}\cong R$.
 If $\gamma'=\gamma +k$ with $k\geq 0$
 then $U_{\gamma'} \subseteq U_{\gamma}$ and $U_{\gamma}/U_{\gamma'}\cong R/\pi^k R$.
For every affine root $\gamma$, the groups $U_{\gamma}$ and $U_{-\gamma}$ generate a subgroup $K_{\gamma} \cong \SL_2(R)$.
We fix an isomorphism $\varphi_{\gamma}: \SL_2(R) \rightarrow K_{\gamma}$ (pinning). Since $K_{-\gamma}=K_{\gamma}$ we
make $\varphi_{-\gamma}$ and $\varphi_{\gamma}$ compatible, in the sense that
 \[
 x_{\gamma}(u) =\varphi_{\gamma}\left(\begin{array}{cc} 1 &  u \\
 0 & 1
 \end{array}
 \right)=
 \varphi_{-\gamma} \left(\begin{array}{cc} 1 & 0  \\
u &1
 \end{array}
 \right) \in U_{\gamma}
 \]
for every $u\in R$. For every $u\in R^{\times}$ we define
\[
 n_{\gamma}(u)=x_{-\gamma}(-u^{-1}) x_{\gamma}(u) x_{-\gamma}(-u^{-1})
= \varphi_{\gamma}\left(\begin{array}{cc} 0 &  u \\
 -u^{-1} & 0
 \end{array}
\right)  \in N
\]
and
\[
h_{\gamma}(u)=n_{\gamma}(u) n_{\gamma}(-1)
=\varphi_{\gamma}\left(\begin{array}{cc} u &  0 \\
 0 & u^{-1}
 \end{array}
\right) \in T.
 \]
 Let $T_{\gamma}=K_{\gamma} \cap T$.
 The map $u\mapsto h_{\gamma}(u)$ is an isomorphism of $R^{\times}$ and $T_{\gamma}$.
Note that
\[
n_{-\gamma}(u)=n_{\gamma}(-u^{-1}) \text{ and } h_{-\gamma}(u)=h_{\gamma}(u^{-1}). \]

 Let $\gamma$ and $\gamma'$ be two affine roots. The interaction of $U_{\gamma}$ and $U_{\gamma'}$
 is as follows:

 \begin{enumerate}
 \item  Assume that  $\gamma'-\gamma =k$. Then
 then $U_{\gamma'} \subseteq U_{\gamma}$ or $U_{\gamma'}\subseteq U_{\gamma}$. In
 particular,
 \[
 [U_{\gamma}, U_{\gamma'}] =1.
 \]
 \item Assume that  $\gamma'+\gamma =k$.  Then $U_{\gamma}$ and $U_{\gamma'}$ are contained in
 a subgroup isomorphic to $\SL_2(F)$. If $k>0$ then  $U_{\gamma}$ and $U_{\gamma'}$ are contained in
 a maximal  pro-$p$ subgroup of $\SL_2(F)$ and one checks that
 \[
 [U_{\gamma}, U_{\gamma'}] \subseteq U_{\gamma +1} U_{\gamma' +1} T(R)^+.
 \]
 \item Otherwise
 \[
 [U_{\gamma}, U_{\gamma'}] \subseteq \prod_{i,j>0} U_{i\gamma+ j\gamma'}.
 \]
 \end{enumerate}

 Note that $I^+$ is the subgroup of $G$ generated by $T(R)^+$ and $U_{\gamma}$ for all positive
 affine roots.  Then $I^+$ is a maximal pro-$p$ subgroup of $G$ and we have a decomposition
 \[
 G = I^+ N I^+ .
 \]

We also have that $I^{++}$ is the subgroup of $G$ generated by $T(R)^+$ and $U_{\gamma}$ for all positive
affine roots $\gamma$ that are not simple affine roots.  Recall that $x_0$ is the unique point in the alcove $\sigma$ on which all simple affine roots take the same value. Their common value is $1/h$, where $h$ is the Coxeter number of $G$.  Then

$$I^+ = <T(R)^+, U_{\psi} : \psi(x_0) \geq 1/h>$$ $$I^{++} = <T(R)^+, U_{\psi} : \psi(x_0) > 1/h>$$
In other words, $I^+ = G(F)_{x_0, 1/h}$ and $I^{++} = G(F)_{x_0, 1/h^+}$.  Let $Z$ denote the center of $G$ and set $H := Z I^+$.

\begin{lemma}(see \cite{grossreeder})
The subgroup $I^{++}$ is normal in $I^+$, with quotient $$I^+ / I^{++} \cong \displaystyle\bigoplus_{\psi \in \Pi} U_{\psi} / U_{\psi+1}$$
\end{lemma}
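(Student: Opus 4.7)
The plan is to split the proof into two parts: normality of $I^{++}$ in $I^+$, and identification of the quotient. Both parts rest on the three commutator formulas recalled above, combined with the numerical fact that $\psi(x_0)=1/h$ for every $\psi\in\Pi$ and $\gamma(x_0)>1/h$ for every positive affine root $\gamma\notin\Pi$.

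To verify normality, I would check that conjugation by each generator of $I^+$ preserves the generators of $I^{++}$. Conjugation by $T(R)^+$ is elementary: for $t\in T(R)^+$ one has $tx_\alpha(y)t^{-1}=x_\alpha(\alpha(t)y)\in U_\gamma$, so $T(R)^+$ normalizes every $U_\gamma$. The nontrivial step is conjugation of $v\in U_\gamma$, with $\gamma$ positive non-simple, by $u\in U_\psi$ with $\psi$ positive affine; writing $uvu^{-1}=[u,v]\cdot v$, it suffices to show $[u,v]\in I^{++}$. For this I apply the three cases of the commutator formula. Case (1) is trivial. Case (2) places $[u,v]$ in $U_{\psi+1}U_{\gamma+1}T(R)^+$, and each of $\psi+1$ and $\gamma+1$ takes value strictly greater than $1/h$ at $x_0$. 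Case (3) places $[u,v]$ in a product of $U_{i\psi+j\gamma}$ with $i,j\geq 1$, and $(i\psi+j\gamma)(x_0)\geq (i+j)/h\geq 2/h>1/h$. In every case $[u,v]\in I^{++}$.

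For the quotient, I would first observe that $I^+$ is generated by $I^{++}$ together with the $U_\psi$ for $\psi\in\Pi$. The same commutator analysis applied to two simple affine roots $\psi\neq\psi'$ shows $[U_\psi,U_{\psi'}]\subseteq I^{++}$: case (2) still lands in $U_{\psi+1}U_{\psi'+1}T(R)^+\subseteq I^{++}$, and case (3) gives a product of root groups whose values at $x_0$ are bounded below by $2/h$. Since each $U_\psi$ is abelian and $U_{\psi+1}\subseteq U_\psi\cap I^{++}$, these facts assemble into a well-defined surjective homomorphism
\[
\bigoplus_{\psi\in\Pi} U_\psi/U_{\psi+1}\longrightarrow I^+/I^{++}.
\]

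The main obstacle, and the step I would devote the most care to, is proving injectivity, equivalently that $U_\psi\cap I^{++}=U_{\psi+1}$ for every $\psi\in\Pi$. I would establish this by invoking the Iwahori-type product decomposition, writing each element of $I^+$ uniquely as $t\cdot\prod_{\gamma\in\Psi^+}u_\gamma$ with $t\in T(R)^+$ and $u_\gamma\in U_\gamma$ in a fixed ordering, and observing that membership in $I^{++}$ forces $u_\psi\in U_{\psi+1}$ for each simple affine $\psi$. This uniqueness-of-decomposition step is the place where a careful appeal to Bruhat--Tits (or Moy--Prasad) theory is essential, rather than to the commutator relations alone.
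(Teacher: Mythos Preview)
The paper does not actually prove this lemma; it simply cites Gross--Reeder. However, the very next lemma (Lemma~\ref{Kplusdecomposition}) is the direct analogue at a general point $p$, and the paper's proof there is a single line: one checks $[K^+,K^+]\subseteq K^{++}$ by the commutator rules, since $(i\gamma+j\gamma')(p)\geq 2/m>1/m$. That one inclusion yields both normality and abelianness of the quotient simultaneously; the identification with the direct sum is then declared an ``easy consequence.''

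Your argument is correct and rests on the same commutator calculus, but is organized less efficiently. You separate normality from the quotient computation, whereas showing $[I^+,I^+]\subseteq I^{++}$ in one stroke handles both. (A small omission: in your normality check you verify $T(R)^+$ conjugating $U_\gamma$ and $U_\psi$ conjugating $U_\gamma$, but not $U_\psi$ conjugating $T(R)^+$; this is easy, since $[x_\psi(y),t]=x_\psi(y(1-\psi(t)))\in U_{\psi+1}$ for $t\in T(R)^+$.) On the other hand, you are more careful than the paper about the injectivity of $\bigoplus_{\psi\in\Pi}U_\psi/U_{\psi+1}\to I^+/I^{++}$, correctly flagging that this requires the unique product decomposition of $I^+$ from Bruhat--Tits or Moy--Prasad theory rather than commutator identities alone. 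The paper's proof suppresses this point entirely.
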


We therefore have that any character of $\displaystyle\bigoplus_{\psi \in \Pi} U_{\psi} / U_{\psi+1}$ extends canonically to a character of $I^+$.

Now let $L$ be a Levi subgroup in $G$.  Let $\Psi_L$ denote the set of affine roots of $L$, viewed as a subset of $\Psi$, and $\Pi_L$ the set of simple affine roots of $L$ corresponding to $\sigma$.  Let $\chi_L$ be an affine generic character for $L$, namely, a character of $\bigoplus_{\psi \in \Pi_L} U_{\psi} / U_{\psi+1}$ that is nontrivial on each $U_{\psi}$, for $\psi \in \Pi_L$.  Consider the subset in $E$ on which all simple affine roots in $L$ take the same value (sometimes this subset will be empty).  Let $1/m$ be this common value.

Let $p \in E$.  Set

$$K^+ = <T(R)^+, U_{\psi} : \psi(p) \geq 1/m>$$ $$K^{++} = <T(R)^+, U_{\psi} : \psi(p) > 1/m>$$
In other words, $K^+ = G(F)_{p, 1/m}$ and $K^{++} = G(F)_{p, 1/m^+}$.

\begin{lemma}\label{Kplusdecomposition}
The subgroup $K^{++}$ is normal in $K^+$, with quotient $$K^+ / K^{++} \cong \displaystyle\bigoplus_{\psi \in \Psi : \psi(p) = 1/m} U_{\psi} / U_{\psi+1}$$
\end{lemma}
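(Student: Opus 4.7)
The plan is to prove this in two steps, in close parallel with the proof of the corresponding statement for $I^+/I^{++}$: first show $K^{++}$ is normal in $K^+$ with abelian quotient, and then construct the direct sum decomposition.

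For normality and abelianness, I would check that every commutator of generators of $K^+$ lies in $K^{++}$, using the three cases of commutator relations listed in the preliminaries. Take affine roots $\psi_1, \psi_2$ with $\psi_i(p) \geq 1/m$. In case (1), $[U_{\psi_1}, U_{\psi_2}] = 1$. In case (2), the relation $\psi_1 + \psi_2 = k$ forces $k = \psi_1(p) + \psi_2(p) \geq 2/m > 0$, so the commutator lies in $U_{\psi_1+1} U_{\psi_2+1} T(R)^+ \subseteq K^{++}$, since $(\psi_i+1)(p) > 1/m$ and $T(R)^+ \subseteq K^{++}$. In case (3), $(i\psi_1 + j\psi_2)(p) \geq (i+j)/m \geq 2/m > 1/m$ for all $i,j > 0$, so the commutator lies in $\prod_{i,j>0} U_{i\psi_1+j\psi_2} \subseteq K^{++}$. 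Finally, for $t \in T(R)^+$ and $u \in U_\psi$ with $\psi(p) \geq 1/m$, direct computation with the Chevalley pinning gives $[t,u] \in U_{\psi+1} \subseteq K^{++}$. This proves normality of $K^{++}$ in $K^+$ and abelianness of the quotient.

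Because the quotient is abelian and $U_{\psi+1} \subseteq K^{++}$ whenever $\psi(p) = 1/m$ (since $(\psi+1)(p) > 1/m$), there is a well-defined surjective homomorphism
$$\bigoplus_{\psi :\, \psi(p) = 1/m} U_\psi / U_{\psi+1} \twoheadrightarrow K^+ / K^{++},$$
the generators of $K^+$ lying in $T(R)^+$ or in $U_\psi$ with $\psi(p) > 1/m$ being killed.

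The main obstacle is injectivity. I would establish it by fixing a total ordering on the finite set of affine roots $\psi$ with $\psi(p) = 1/m$ and showing, by iterated application of the commutator relations above, that every element of $K^+$ admits a unique expression as an ordered product $\prod_\psi u_\psi \cdot k$, with each $u_\psi$ a chosen representative for a class in $U_\psi/U_{\psi+1}$ and $k \in K^{++}$. This is the ordered-product factorization that underlies the standard identification of the graded pieces of the Moy--Prasad filtration; equivalently, since $K^+ = G(F)_{p,1/m}$ and $K^{++} = G(F)_{p,1/{m}^+}$ and in the cases of interest $1/m$ is not an integer (so no torus contribution appears in the graded quotient), the desired isomorphism is the standard description of the graded quotients of the Moy--Prasad filtration at non-integer depth.
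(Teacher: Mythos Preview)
Your proposal is correct and follows essentially the same approach as the paper: both arguments reduce normality and abelianness to the commutation rules listed in Section~\ref{preliminaries}, with the key inequality $(i\psi_1+j\psi_2)(p)\geq 2/m>1/m$. The paper's proof is quite terse---it records only that inequality and declares the rest an easy consequence of the commutation rules---whereas you spell out all three cases, the torus commutator, and the construction of the isomorphism (surjectivity from the generator description, injectivity via the ordered-product/Moy--Prasad factorization); this added detail is sound and does not diverge from the paper's method.
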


\begin{proof}
All statements are easy consequences of the rules of commutations. Note that $$[K^+, K^+] \subset \displaystyle\prod_{i,j \geq 1} T(R)^+ U_{i \gamma + j \gamma'} \subset K^{++}$$ since $(i \gamma + j \gamma')(p) \geq 1/m + 1/m > 1/m$. Thus, $K_+ / K_{++}$ is abelian.
\end{proof}

Since $\psi(p) = 1/m \ \forall \psi \in \Pi_L$, $\chi_L$ canonically extends to $K^+$, which gives us a natural Hecke algebra $\mathcal{H}(K^+, \chi_L)$ to study.  This Hecke algebra depends on the point $p$.  We impose two additional conditions on the point $p$ in an attempt make calculation easier and in an attempt to uniquely identify $p$.

Condition (1): $p$ lies on a hyperplane $H_{\gamma}$ for some affine root $\gamma$.

Condition (2): $\psi(p) \in \frac{1}{m} \mathbb{Z} \ \ \forall \psi \in \Psi$.

Note that condition (2) has the effect of forcing $K^+$ to be $G(F)_{p, 0^+}$.

\section{Symplectic Groups}

Now let  $\Phi$  be of type $C_n$ with $\alpha_1=e_1-e_2, \alpha_2 = e_2 - e_3, ..., \alpha_{n-1} = e_{n-1} - e_n, \alpha_n = 2 e_n$ the simple roots. Then $\delta=2e_1$ is the highest root.
  We view the roots as functionals on $E=\mathbb R^n$ using the standard dot-product.
  The standard choice of affine simple roots are $\alpha_1, \alpha_2, ..., \alpha_n, 1 - \delta$.

An arbitrary Levi $L$ of $Sp(2n)$ looks like $GL(n_1) \times GL(n_2) \times ... \times GL(n_r) \times Sp(2m)$ such that $m + \sum_{i = 1}^r n_i = n$.  In this section we will determine if there is a point in $E$ where the set of simple affine roots of $L$ all take the same value.

Suppose $L$ is a Levi that contains the factor $GL(n) \times GL(m)$, with $n,m > 1$.  Let $\beta_1, ..., \beta_{n-1}, 1 - \displaystyle\sum_{k=1}^{n-1} \beta_k$, and $\gamma_1, ..., \gamma_{m-1}, 1 - \displaystyle\sum_{i=1}^{m-1} \gamma_i$ denote their sets of simple affine roots, respectively.  If we consider where these simple affine roots all take the same value in $E$, this gives the condition

$$\beta_1 = \beta_2 = ... = \beta_{n-1} = 1 - \displaystyle\sum_{k=1}^{n-1} \beta_k = \gamma_1 = \gamma_2 = ... = \gamma_{m - 1} = 1 - \displaystyle\sum_{i=1}^{m-1} \gamma_i$$

In particular, if we set $x = \beta_1$, then $$1 - \displaystyle\sum_{k=1}^{n-1} x = 1 - \displaystyle\sum_{i=1}^{m-1} x.$$ If $n \neq m$, we get that $x = 0$, i.e. that $\beta_k = 0 \ \forall k = 1, ..., n-1$ and $\gamma_i = 0 \ \forall i = 1, ..., m$.  But then the equality $$\beta_{n-1} = 1 - \displaystyle\sum_{k=1}^{n-1} \beta_k$$ is a contradiction.  Therefore, it must be that $n = m$.

Suppose now that $L$ is a Levi that contains the factor $GL(n) \times Sp(2m)$, with $n, m > 1$.  Let $\beta_1, ..., \beta_{n-1}, 1 - \displaystyle\sum_{k=1}^{n-1} \beta_k$, and $\alpha_{n-m+1}, \alpha_{n-m+2}, ..., \alpha_{n-1}, \alpha_n, 1 - \displaystyle\sum_{i=1}^m \alpha_{n-m+i} - \displaystyle\sum_{i=1}^{m-1} \alpha_{n-m+i}$ denote their sets of simple affine roots, respectively.  If we consider where these simple affine roots all take the same value in $E$, this gives the condition
$$\beta_1 = \beta_2 = ... = \beta_{n-1} = 1 - \displaystyle\sum_{k=1}^{n-1} \beta_k = \alpha_{n-m+1} = \alpha_{n-m+2} = \alpha_{n-1} = \alpha_n = 1 - \displaystyle\sum_{i=1}^m \alpha_{n-m+i} - \displaystyle\sum_{i=1}^{m-1} \alpha_{n-m+i}$$

Set $x = \beta_1$.  Then the above simultaneous equality gives, in particular, $1 - (n-1)x = 1 - mx - (m-1)x.$  If $n \neq 2m$, then we get that $x = 0$, contradicting $\beta_{n-1} = 1 - \displaystyle\sum_{k=1}^{n-1} \beta_k$. Therefore, $n = 2m$.

In conclusion, if $L$ is a Levi of $Sp(2n)$ and if the set of simple affine roots of $L$ has a common solution in $E$, then $L$ must be either of the form $GL(n) \times GL(n) \times ... \times GL(n) \times Sp(2n)$, where $n > 1$, or of the form $GL(1) \times Sp(2n-2)$, where $n > 1$.  It is easy to see that the converse holds as well.

\section{The Levi $SL(2) \times GL(1)$ in $Sp(4)$}

\subsection{Some open compact subgroups}\label{someopencompact}

  Now let  $\Phi$  be of type $C_2$ such that $\alpha=e_1-e_2$ and
  $\beta=2e_2$ are simple roots. Then $\delta=2e_1$ is the highest root.
  We view the roots as functionals on $E=\mathbb R^2$ using the standard dot-product.
  The standard choice of affine simple roots are $\alpha$, $\beta$ and $1-\delta$. The corresponding
  alcove $\sigma$  consists of $(x_1,x_2)\in \mathbb R^2$ such that  $\frac{1}{2} > x_1 > x_2 > 0$, as pictured below.

  \begin{picture}(300,220)(-50,10)

\put(50,135){\line(1,0){200}}
\put(50,105){\line(1,0){200}}
\put(135,30){\line(0,1){180}}
\put(165,30){\line(0,1){180}}

\put(60,30){\line(1,1){180}}

\put(152,112){$\sigma$}
\put(142,122){$\sigma'$}


\put(40,20){$\alpha$}
\put(35,100){$\beta$}
\put(20,130){$1-\beta$}

\put(130,20){$\delta$}
\put(152,20){$1-\delta$}

\put(200,115){$S$}

\end{picture}

\noindent

Let $L$ be the Levi $SL(2) \times GL(1)$ in $Sp(4)$.  The set of affine roots of $L$ are $\beta$ and $1-\beta$.  In particular, the subset in $E$ on which all simple affine roots in $L$ take the same value is the set $(x_1, 1/4)$, where $x_1$ is arbitrary.  Up to conjugation by the affine Weyl group, by Conditions (1) and (2), $p = (1/4, 1/4)$, the midpoint of the edge bordering $\sigma$ and $\sigma'$.  By Lemma \ref{Kplusdecomposition}, we have

\begin{cor} The group $K^{++}$ is a normal subgroup of $K^+$ and
\[
K^+/K^{++} \cong U_{\beta}/U_{\beta+1} \times U_{1-\beta}/U_{2-\beta} \times U_{\delta}/U_{\delta+1} \times U_{1-\delta}/U_{2-\delta} \times U_{\alpha + \beta}/U_{\alpha+\beta+1} \times U_{1-\alpha-\beta}/U_{2-\alpha-\beta}
\]
\end{cor}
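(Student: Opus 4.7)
The plan is to apply Lemma \ref{Kplusdecomposition} to the specific data $G = Sp(4)$, $L = SL(2) \times GL(1)$, and $p = (1/4, 1/4)$. That lemma already gives normality of $K^{++}$ in $K^+$ and identifies the quotient as an abelian direct sum (equivalently, a direct product) indexed by the affine roots $\psi$ with $\psi(p) = 1/m$. So the corollary reduces to two small tasks: identifying $1/m$, and listing the affine roots $\psi \in \Psi$ with $\psi(p) = 1/m$.

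To identify $1/m$, I would read off from the discussion preceding the corollary that the simple affine roots of $L$ are $\beta$ and $1 - \beta$, both taking value $1/2$ at $p = (1/4, 1/4)$, so $1/m = 1/2$. I would also briefly confirm that $p$ satisfies Conditions (1) and (2) from Section \ref{preliminaries}: since $\alpha(p) = 0$, $p$ lies on $H_\alpha$; and since the finite roots of $C_2$ are $\pm(e_1 \pm e_2)$ and $\pm 2e_i$, all of which evaluate to elements of $\tfrac{1}{2}\bbZ$ at $p$, every affine root $\mu + k$ does likewise.

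To enumerate the affine roots $\psi$ with $\psi(p) = 1/2$, I would tabulate the values of the eight finite roots at $p$: the short roots $\pm \alpha = \pm(e_1 - e_2)$ vanish, while $\pm \beta$, $\pm \delta$, and $\pm(\alpha + \beta)$ take values $\pm 1/2$. An affine root $\mu + k$ (with $\mu \in \Phi$ and $k \in \bbZ$) satisfies $(\mu + k)(p) = 1/2$ precisely when $(\mu(p), k) = (1/2, 0)$ or $(-1/2, 1)$. This rules out $\pm \alpha$ and yields exactly the six affine roots $\beta, 1-\beta, \delta, 1-\delta, \alpha+\beta, 1-\alpha-\beta$ listed in the corollary. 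Invoking Lemma \ref{Kplusdecomposition} then produces the claimed decomposition.

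Since Lemma \ref{Kplusdecomposition} already packages all the structural work (normality, abelianness, identification of the quotient), there is no real obstacle here; the only step worth double-checking is the finite enumeration above, which I would verify against the picture of the alcove: the six affine hyperplanes passing through $p = (1/4, 1/4)$ in the figure correspond exactly to the six affine roots in the decomposition.
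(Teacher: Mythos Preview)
Your proposal is correct and matches the paper's approach exactly: the paper simply writes ``By Lemma \ref{Kplusdecomposition}, we have'' before stating the corollary, and your enumeration of the six affine roots taking value $1/2$ at $p$ fills in the routine omitted detail. One small slip in your closing sanity check: only the hyperplane $H_{\alpha}$ passes through $p=(1/4,1/4)$, not six --- the six affine roots in the decomposition are those with $\psi(p)=1/2$, not those vanishing at $p$.
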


Let $\rho$ the set of all $(x_1,x_2)\in \mathbb R^2$ such that $\frac{1}{2} > x_1, x_2 >0$
 ($\rho$ is the union of $\sigma$ and $\sigma'$ and the open ended edge separating
 $\sigma$ and $\sigma'$).

\subsection{Weak Hecke algebra}\label{weakhecke}

Let $\psi$ be a character of $K^+/K^{++}$ non-trivial on  both $U_{\beta}/U_{\beta+1}$ and
$U_{1-\beta}/U_{2-\beta}$, but trivial on $U_{\delta}/U_{\delta+1}, U_{1-\delta}/U_{2-\delta}, U_{\alpha+\beta}/U_{\alpha+\beta+1}$, and $U_{1-\alpha - \beta}/U_{2-\alpha-\beta}$.
Let $H_{\psi}$ be the Hecke algebra of $(K^+,\psi)$-biinvariant functions on $G$. We shall now compute the
support of this Hecke algebra. Computation is based on the following simple observation. A double
co set $K^+ g K^+$ supports a function in $H_{\psi}$ if and only if $\psi(g k g^{-1})= \psi(k)$ for
every $k\in K^+$ such that $gkg^{-1}\in K^+$.

Let $ N_{\psi}$ be the subgroup of $N$ consisitng of
elements $n$ such that
\begin{itemize}
\item $n U_{\beta}n^{-1}=U_{\beta}$.
\item $nx_{\beta}n^{-1}\equiv x_{\beta} \pmod {U_{\beta+1}}$ for all
$x_{\beta}\in U_{\beta}$.
\end{itemize}
Note that $n\in N_{\psi}$ satisfies the same conditions if we replace
$U_{\beta}$ by $U_{1-\beta}$.
Let $T_{\psi}=N_{\psi}\cap T(R)$.
Then  $W_{\psi} = N_{\psi}/T_{\psi}$ is the subgroup
of the affine Weyl group consisting of transformations preserving the horizontal
strip $S:= 0<\beta <1$.
It is clear that any element in $N_{\psi}$ supports a non-trivial
element in $H_{\psi}$.
The main result of this section is that the converse is true.

\begin{prop}
The support of the Hecke algebra $H_{\psi}$ is equal to $K^+ N_{\psi} K^+$.
\end{prop}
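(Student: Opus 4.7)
My plan is to prove the nontrivial containment, namely that any element $g$ supporting a non-trivial function in $H_\psi$ lies in $K^+ N_\psi K^+$; the reverse containment is already asserted.

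The first step is to reduce to the case $g \in N$. Every double coset $K^+ g K^+$ lies inside a parahoric double coset $G_{p,0} \dot{w} G_{p,0}$ for some $w \in W_{\mathrm{aff}}$ and lift $\dot w \in N$, by the standard affine Bruhat decomposition applied to the parahoric at $p$. Since $K^+$ is normal in $G_{p,0}$ with finite quotient the Levi $L_p(k) = SL(2,k) \times GL(1,k)$, absorbing the $L_p(k)$-twists into the $K^+$-factors on either side requires verifying that the induced $L_p(k)$-action on $\psi$ can be undone, using that $\psi$ is trivial on all root-group contributions except those on $U_\beta / U_{\beta+1}$ and $U_{1-\beta}/U_{2-\beta}$. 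After this reduction, the support condition becomes $\psi(n k n^{-1}) = \psi(k)$ for all $k \in K^+ \cap n^{-1} K^+ n$.

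Next I would analyze the action of $n$ on the affine root $\beta$. Writing the image of $n$ in $W_{\mathrm{aff}} = W_0 \ltimes Q^\vee$ as $(w,v)$ with $Q^\vee = \mathbb{Z}^2$, we have $n \cdot \beta = w\beta - (w\beta)(v)$ and $w\beta \in W_0 \cdot \beta = \{\pm\beta, \pm\delta\}$. I claim $n\cdot\beta = \beta$, and establish this by case analysis. In each case where $n\cdot\beta \neq \beta$, I exhibit a test element $x \in U_\beta$ (or $U_{1-\beta}$) inside $K^+$ with $nxn^{-1} \in K^+$ but $\psi(x) \neq \psi(nxn^{-1})$. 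For example: if $n\cdot\beta$ lies in a $\pm\delta$-translate, at which $\psi$ is trivial by hypothesis, then taking $x = x_\beta(u)$ with $\psi_\beta(u) \neq 1$ gives $\psi(nxn^{-1}) = 1$; if $n \cdot \beta = \beta+k$ with $k \neq 0$, one uses $x = x_\beta(u)$ with valuation of $u$ calibrated to the valuation of the torus-part scalar (which has valuation $k$), so that $nxn^{-1}$ lands on the desired side of $K^{++}$, producing the contradiction. The potentially problematic case $n \cdot \beta = 1 - \beta$ is ruled out a priori by the lattice constraint $\beta(v) = 2v_2 \in 2\mathbb{Z}$, which prevents $\beta(v) = 1$.

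Once $n\cdot\beta = \beta$ is proved, an analogous affine computation yields $n \cdot (1-\beta) = 1-\beta$ automatically. Conjugation by $n$ then takes the form $x_\beta(u) \mapsto x_\beta(c_n u)$ for some $c_n \in R^\times$, and the support condition $\psi_\beta(c_n u) = \psi_\beta(u)$ for all $u \in R$ forces $c_n \in 1 + \pi R$, i.e., the action of $n$ on $U_\beta / U_{\beta+1}$ is trivial. This matches the second defining condition of $N_\psi$, so $n \in N_\psi$. The main obstacle is the initial reduction to $n \in N$: since $K^+$ is the pro-unipotent radical of a non-maximal parahoric, the Bruhat decomposition leaves a finite Levi piece, and correctly tracking the interaction between $\psi$ and $L_p(k)$ requires care. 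The subsequent case analysis is routine given the explicit inventory of affine roots with value $1/2$ at $p$ and the behavior of the torus scalars under the translations in $W_{\mathrm{aff}}$.
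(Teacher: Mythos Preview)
Your reduction to $g \in N$ is the gap. Concretely, $K^+ = G_{p,0^+}$ is the pro-unipotent radical of the parahoric $G_{p,0}$, and $G_{p,0}/K^+ \cong L_p(k)$ has an $SL_2(k)$ factor coming from the root $\alpha$ (since $\alpha(p)=0$). The $K^+$-double cosets inside a fixed parahoric double coset $G_{p,0}\dot w G_{p,0}$ are \emph{not} all represented by elements of $N$: the unipotent elements $x_\alpha(u)$ lie in $G_{p,0}$ but not in $K^+$ and not in $N$, so one is forced to work with representatives of the form $x_\alpha n y_\alpha$ with $x_\alpha,y_\alpha \in U_\alpha$ (this is exactly what the paper does, using $I^+ = K^+ \cdot U_\alpha$ and $G = I^+ N I^+$). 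Your proposed mechanism for ``undoing'' the $L_p(k)$-twists would require that the $L_p(k)$-orbit of $\psi$ coincide with its $(N\cap G_{p,0})$-orbit, but this is false: conjugation by $x_{-\alpha}(u)$ moves a piece of $U_\delta$ into $U_\beta$ (via $[U_{-\alpha},U_\delta] \subset U_{\alpha+\beta} U_\beta$), so $x_{-\alpha}(u)$ does not stabilize $\psi$, and the resulting twist is not achieved by any torus element or by $w_\alpha$. The paper has to work genuinely hard in its Case~(3) (Lemma~\ref{fine_cosets} and the arguments following it) to show that $f(x_\alpha n y_\alpha)=0$ unless $x_\alpha,y_\alpha \in U_{\alpha+1}$; this is exactly the content you are skipping.

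A secondary issue: even granting the reduction, your test-element strategy runs in the wrong direction. You take $x = x_\beta(u)$ and look at $nxn^{-1}$, but there is no guarantee that $nxn^{-1} \in K^+$ (e.g.\ if $n\cdot\beta = \delta + k$ with $k<0$). The paper instead chooses $x_\gamma$ with $\gamma(p)$ large so that $x_\gamma \in K^{++}$ and $n x_\gamma n^{-1} \in U_\beta$ or $U_{1-\beta}$; then one knows both sides lie in $K^+$ and the intertwining condition gives the contradiction. Your parity argument ruling out $n\cdot\beta = 1-\beta$ is correct and matches the paper's observation in its Case~(2).
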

\begin{proof} Any $K^+$-double coset is represented by $x_{\alpha}ny_{\alpha}$ where
$x_{\alpha},y_{\alpha}\in U_{\alpha}$ and $n\in N$. Let $w$ be the element in the affine
Weyl group corresponding to $n^{-1}$.  We have the following cases:

\noindent
\underline{Case (1):} $w(S)$ is not equal to $S$ or the vertical strip $0<\delta <1$.
Then there exists a affine root $\gamma$ such that $\gamma=0$ is a boundary of $w(S)$, $\gamma(p)>1/2$, and $nU_{\gamma}n^{-1}=U_{\beta}$ or $U_{1-\beta}$, as shown on the picture.

\begin{picture}(300,220)(-50,10)

\put(50,135){\line(1,0){200}}
\put(50,105){\line(1,0){200}}
\put(135,30){\line(0,1){180}}
\put(165,30){\line(0,1){180}}

\put(75,30){\line(0,1){180}}
\put(105,30){\line(0,1){180}}

\put(60,165){\line(1,0){60}}
\put(60,195){\line(1,0){60}}

\put(147,117){$\rho$}
\put(79,177){$w(\rho)$}


\put(65,20){$\gamma$}

\put(200,115){$S$}
\put(79,60){$w(S)$}

\put(35,100){$\beta$}
\put(20,130){$1-\beta$}

\put(130,20){$\delta$}
\put(152,20){$1-\delta$}

\end{picture}

Assume that $nU_{\gamma}n^{-1}=U_{\beta}$. Let $x_{\gamma}\in U_{\gamma}$
such that $\psi(x_{\beta})\neq 1$, for $x_{\beta}=n x_{\gamma} n^{-1}$.
Recall that $[y_{\alpha}, x_{\gamma}]\in \prod_{i,j>0} U_{i\alpha+j\gamma}$. Since $\gamma(p) > 1/2$,
$(i\alpha+j\gamma)(p)> 1/2$ for all $i,j>0$.
Thus $[y_{\alpha}, x_{\gamma}]\in K^{++}$ and,
for every $f\in H_{\psi}$,
\[
f(x_{\alpha}n y_{\alpha})=f(x_{\alpha}n y_{\alpha}x_{\gamma})=
f(x_{\alpha}n x_{\gamma} y_{\alpha})
=f(x_{\alpha} x_{\beta} n y_{\alpha}).
\]
Moreover, since $[x_{\alpha},x_{\beta}]\in U_{\alpha+\beta} U_{2\alpha+\beta}\subseteq K^{+}$ and since
 the character $\psi$ is trivial on
$U_{\alpha+\beta}$ and $U_{2 \alpha + \beta}$, we have
\[
f(x_{\alpha} x_{\beta} n y_{\alpha})=
f(x_{\beta}x_{\alpha} ny_{\alpha})=\psi(x_{\beta}) f(x_{\alpha} ny_{\alpha}).
\]
It follows that $f(x_{\alpha} n y_{\alpha})=0$.
If $nU_{\gamma}n^{-1}=U_{1-\beta}$ a similar argument
applies (even easier since $[U_{\alpha}, U_{1-\beta}]=1$).

\noindent
\underline{Case (2):} $w(S)$ is the vertical strip $0<\delta <1$.
Then $n U_{\delta} n^{-1}= U_{\beta}$. (There are two orbits of long affine roots,
distinguished by the parity of the value at the origin. In particular $n U_{\delta} n^{-1}$ cannot be
$U_{1-\beta}$.) Let $x_{\delta}\in U_{\delta}$
such that $\psi(x_{\beta})\neq 1$, for $x_{\beta}=n x_{\delta} n^{-1}$.
Since $x_{\delta}$ commutes with $y_{\alpha}$,
\[
f(x_{\alpha}n y_{\alpha})=f(x_{\alpha}n y_{\alpha}x_{\delta})=
f(x_{\alpha} x_{\beta} n y_{\alpha})
=\psi(x_{\beta})f(x_{\alpha} n y_{\alpha}).
\]

\noindent
\underline{Case (3):}  $w^{-1}(S)=S$.
 For the reminder of the proof we fix $\gamma=-e_1-e_2$.
 Note that $\psi$ is trivial on $U_{\gamma+ k}$ if $k > 0$.

\begin{lemma} \label{fine_cosets}
Let $n\in N$ and let $w$ be the corresponding element in $W$. Assume that $w^{-1}(S)=S$. Then:
\begin{enumerate}
\item If $w^{-1}(\alpha)(p) > 1/2$ and $w(\alpha)(p) < 1/2$, i.e. if $w$ is a translation by an integer $k>0$ to the right, then
$K^+ x_{\alpha} n y_{\alpha} K^+= K^+ n y_{\alpha} K^+$.
\item If $w(\alpha)(p) > 1/2$ and $w^{-1}(\alpha)(p) < 1/2$, i.e. if $w$ is a translation by an integer $k>0$ to the left, then
$K^+ x_{\alpha} n y_{\alpha} K^+= K^+ x_{\alpha} nK^+$.
\item If $w(\alpha)(p) > 1/2$ and $w^{-1}(\alpha)(p) > 1/2$, i.e. if $w$ is a reflection about the plane $\delta =k > 0$, then
$K^+ x_{\alpha} n y_{\alpha} K^+= K^+ n  K^+$.
\end{enumerate}
\end{lemma}
\begin{proof} (1) and (2) follow from  $n^{-1} U_{\alpha} n = U_{\alpha +k}$ and
$n U_{\alpha} n^{-1} = U_{\alpha +k}$, respectively. (3) follows  from
$n U_{\alpha} n^{-1} = n U_{\alpha} n^{-1}= U_{\gamma +k}$.
\end{proof}

Assume now that either $w^{-1}(\alpha)(p) > 1/2$ and $w(\alpha)(p) < 1/2$, or $w = 1$, i.e. that $n$ corresponds to a translation to the right by $k\geq 0$.
We claim that $f(n y_{\alpha})=0$ if $y_{\alpha}\in U_{\alpha} \setminus  U_{\alpha+1}$.
Let $\delta'=2k+1-\delta$. Then $U_{\delta'} \subset K^+$, $\psi$ is trivial on $U_{\delta'}$, and  $n^{-1} U_{\delta'} n=U_{1-\delta}$.  Let $x_{\delta'}\in U_{\delta'}\setminus U_{\delta'+1}$, and let $x_{1-\delta}=n^{-1} x_{\delta'} n\in U_{1-\delta}\setminus U_{2-\delta}$. Then
\[
f(ny_{\alpha})=f(x_{\delta'}n y_{\alpha})= f(n  x_{1-\delta} y_{\alpha}) =
\psi( [y_{\alpha}, x_{1-\delta}]^{-1}) f(ny_{\alpha}).
\]
Note that $[y_{\alpha}, x_{1-\delta}]\in U_{\gamma+1} U_{1-\beta}$.  Since
$\psi$ is non-trivial on $U_{1-\beta}$,  it follows that $\psi([y_{\alpha}, x_{1-\delta}])\neq 1$ for
appropriate choice of $x_{1-\delta}$ if  $y_{\alpha}\in U_{\alpha}\setminus U_{\alpha+1}$. This proves the claim.
A similar argument shows that $f(x_{\alpha}n)=0$ if $n$ corresponds to a translation to the left.

Assume now that either $w^{-1}(\alpha)(p) < 1/2$ and $w(\alpha)(p) < 1/2$, or $w = 1$, i.e. that $n$ is a reflection about $\delta=k \leq 0$. We claim that
$f(x_{\alpha} n y_{\alpha})=0$ unless both $x_{\alpha}$ and $y_{\alpha}$ are in $U_{\alpha+1}$. Assume, without loss
of generality, that $x_{\alpha} \in U_{\alpha}\setminus U_{\alpha+1}$. First we assume that $y_{\alpha} \notin U_{\alpha + 1}$.  Note that $U_{-\delta+1} \subset K^+$ and $\psi$ is trivial on $U_{-\delta + 1}$. Let $x_{-\delta+1} \in U_{-\delta+1}$.  Then
\[
f(x_{\alpha} ny_{\alpha})=f(x_{\alpha}n y_{\alpha}x_{-\delta+1})= f(x_{\alpha} x_{\delta -2k + 1} ny_{\alpha}) \psi([x_{-\delta+1}, y_{\alpha}])^{-1} =
\]
\[
\psi( [x_{\delta-2k+1}, x_{\alpha}]^{-1}) \psi([x_{-\delta+1}, y_{\alpha}])^{-1} f(x_{\delta-2k+1} x_{\alpha}ny_{\alpha}).
\]
for some $x_{\delta-2k+1} \in U_{\delta-2k+1}$.
Note that  $[x_{-\delta+1}, y_{\alpha}] = x_{1-\beta} x_{1 - \gamma}$ for some $x_{1 -\beta} \in U_{1-\beta}$ and $x_{1 - \gamma} \in U_{1-\gamma}$.  Also, note that $\psi$ is trivial on $[x_{\delta-2k+1}, x_{\alpha}]$. Therefore,   $\psi([x_{-\delta+1}, y_{\alpha}]) \neq 1$
for an appropriate chose of $x_{-\delta +1}$, proving the claim in this case.  Now assume that $y_{\alpha} \in U_{\alpha+1}$.  Then $f(x_{\alpha} n y_{\alpha}) = f(x_{\alpha} n)$.  We compute
\[
f(x_{\alpha} n)=f(x_{-\delta+1} x_{\alpha}n)= \psi([x_{-\delta+1}^{-1}, y_{\alpha}^{-1}]) f(x_{\alpha} x_{-\delta+1} n)  = \psi([x_{-\delta+1}^{-1}, y_{\alpha}^{-1}]) f(x_{\alpha} n x_{\delta-2k+1})
\]
Note that $\psi([x_{-\delta+1}^{-1}, y_{\alpha}^{-1}]) \neq 1$ and $x_{\delta-2k+1} \in K^{++}$.  Therefore, $f(x_{\alpha} n) = 0$.
This proves the claim.

Combining with Lemma \ref{fine_cosets} we see that the support of $H_{\psi}$
consisits of double cosetes $K^+ n K^+$ where $n\in N$ and it satisfies the first
bullet of the definition of $N_{\psi}$. But such $n$ belongs to the support only if
the second bullet is also satisfied.
\end{proof}

\subsection{A length function}

Let $q$ be the order of $R/\pi R$.
We define $\ell:  N_{\psi}\rightarrow \mathbb Z$ so that $q^{\ell(n)}$ is the number
of $K^+$ cosets in $K^+ n K^+$:
\[
 K^+ n K^+ =\cup_{j=1}^{q^{\ell(n)}} x_j K^+.
\]
Note that $q^{\ell(n)}$ is also equal to the number of $K^+\cap nK ^+n^{-1}$ cosets
in $K^+$. More precisely, we can write $x_j=y_j n$ where $y_j\in K^+$ such that
\[
 K^+ =\cup_{j=1}^{q^{\ell(n)}} y_j (K^+ \cap nK^+ n^{-1}).
\]
 Clearly, the function  $\ell$  descends to
 the group $W_{\psi}$. The group $W_{\psi}$  acts simply transitively on $\frac12\times \frac12$ squares contained in the
 horizontal strip $S$. Let $w\in W_{\psi}$.
  Let $\rho_0=\rho, \rho_1 , \ldots  ,\rho_m=w(\rho)$
 be a gallery between $\rho$ and $w(\rho)$.  The number $m$ is called the length of the
 gallery.

 \begin{prop} \label{length}
 Let $w\in W_{\psi}$. Then $\ell(w)=2m$ where $m$ is the length of the gallery from
 $\rho$ to $w(\rho)$.
 \end{prop}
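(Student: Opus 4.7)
The plan is to proceed by induction on $m$, based on the key identity
\[
\ell(w) \;=\; \#\{\psi \in \Psi : \psi(p) > 0 \text{ and } \psi(wp) \leq 0\}.
\]
This follows from the definition $q^{\ell(w)} = [K^+ : K^+ \cap wK^+w^{-1}]$ together with the identifications $K^+ = G(F)_{p, 0^+}$ and $wK^+w^{-1} = G(F)_{wp, 0^+}$: analyzing each classical root direction separately via the Moy-Prasad filtration shows that every affine root $\psi$ with $\psi(p) > 0$ and $\psi(wp) \leq 0$ contributes exactly a factor of $q$ to the index, while no other contribution arises.

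The base case $m = 0$ forces $w = 1$ by simple transitivity of $W_{\psi}$ on squares, giving $\ell(1) = 0$. For the inductive step, fix a reduced expression $w = s_{i_1}\cdots s_{i_m}$ in the infinite dihedral group $W_{\psi} = \langle s_{\delta}, s_{1-\delta}\rangle$, and set $p_k = (s_{i_1}\cdots s_{i_k})(p)$, so that $p_0 = p$, $p_m = wp$, and each $p_k$ is the midpoint of the diagonal of the $k$-th square $\rho_k$ in the gallery. Define
\[
S(w) = \{\psi \in \Psi : \psi(p_0) > 0,\ \psi(p_m) \leq 0\}, \quad S_k = \{\psi \in \Psi : \psi(p_{k-1}) > 0,\ \psi(p_k) \leq 0\}.
\]
The goal reduces to showing $S(w) = \bigsqcup_{k=1}^m S_k$ with $|S_k| = 2$ for each $k$.

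For the cardinality, the step $p_{k-1} \to p_k$ is the $W_{\psi}$-conjugate of $p \to s_{i_k}(p)$ by $s_{i_1}\cdots s_{i_{k-1}}$, and since $W_{\psi}$ permutes $\Psi$, the count $|S_k|$ is invariant under this conjugation. One reduces to the two cases of simple reflections. For $s = s_{1-\delta}$ with $sp = (3/4, 1/4)$, a direct enumeration over the eight classical root directions of $C_2$ and the integer shifts identifies exactly two flipping affine roots: $1 - \delta$ (the $W_{\psi}$-wall crossed by $s$) and $1 - \alpha - \beta$ (whose hyperplane passes through $sp$). The analogous computation for $s_{\delta}$ yields $\delta$ and $\alpha + \beta$. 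Hence $|S_k| = 2$ in every case.

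For the partition, the inclusion $S(w) \subseteq \bigsqcup_k S_k$ follows from an intermediate-value argument applied to the finite sequence $\psi(p_0), \ldots, \psi(p_m)$. The reverse inclusion and disjointness come from the monotonicity claim that the sequence $\psi(p_0), \ldots, \psi(p_m)$ changes sign at most once for each $\psi$. The main obstacle is making this monotonicity rigorous: while the standard no-wall-recrossing property of reduced galleries in the ambient affine Weyl group handles the $W_{\psi}$-wall affine roots directly, the diagonal affine roots (which are touched at endpoints of a step but not crossed transversally) require additional care. One can argue using the structure of $W_{\psi}$ as an infinite dihedral group: any reduced gallery in $W_{\psi}$ moves consistently in one direction along the strip $S$, so the diagonal roots, once they acquire a non-positive sign at some $p_k$, retain it along the remainder of the gallery. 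Summing $|S_k| = 2$ over the $m$ steps yields $\ell(w) = 2m$.
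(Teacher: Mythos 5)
Your proposal is correct and takes essentially the same route as the paper: both identify $\ell(w)$ with the number of affine roots that are positive at $p$ (equivalently on $\rho$) but not at $wp$ (equivalently on $w(\rho)$), and then count exactly two such roots per step of the minimal gallery. You simply spell out what the paper leaves implicit — the root-group/Moy--Prasad computation of the index $[K^+ : K^+\cap nK^+n^{-1}]$, the per-step enumeration for $s_{\delta}$ and $s_{1-\delta}$ (matching the paper's examples $\delta,\ \alpha+\beta$ and $1-\delta,\ 1-\alpha-\beta$), and the no-recrossing/monotonicity along the gallery, which the paper asserts as ``positive on both $\rho$ and $w(\rho)$ iff positive on the whole gallery'' and which you justify via the infinite dihedral structure of $W_{\psi}$.
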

 \begin{proof}
 Assume that $w$ corresponds to $n\in N_{\psi}$. Then  $U_{\gamma}\subseteq n K^+ n^{-1}\cap K^+$ if
 and only if $\gamma$ is positive on both $\rho$ and $w(\rho)$ or, equivalently, $\rho$ is positive on the
 whole gallery. The proposition follows from the fact that the number of $\gamma$  positive on
 $\rho$ but not positive on the whole gallery is $2m$.
 \end{proof}

For example, if $n$ corresponds to the reflection $w_{\delta}$ then $\gamma=e_1+e_2$
and $\delta$ are the two roots positive on $\rho$ but not positive on $w_{\delta}(\rho)$.
Thus
\[
K^+ n K^+ = \cup_{u,v\in R/\pi R} x_{\gamma}(v)x_{\delta}(u) n K^+.
\]
If $n$ corresponds to the reflection $w_{1-\delta}$, then
\[
K^+ n K^+ = \cup_{u,v\in R/\pi R} x_{1-\gamma}(v)x_{1-\delta}(u) n K^+.
\]

\begin{prop} \label{product} Let $n_1,n_2\in N_{\psi}$ correspond to the reflection
$w_{\delta}\in W_{\psi}$ about the axis $\delta=0$. Let $n_3\in N_{\psi}$ such
that
\[  n_3  \in K^+ n_1 K^+ n_2 K^+. \]
Then $n_3$ corresponds to the trivial element in $W_{\psi}$ or to $w_{\delta}$.
This conclusion is also valid if we replace $w_{\delta}$ by $w_{1-\delta}$,
the reflection about $\delta=1$.
\end{prop}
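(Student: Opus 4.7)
The plan is to reduce elements of the triple product to an explicit normal form, then extract the Weyl class of the result through an $\SL_2$-Bruhat computation, keeping only those Weyl classes that actually lie in $W_\psi$.

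First I reduce to a normal form. Write $n_i = t_i n_0$ with $n_0$ a fixed lift of $w_\delta$ and $t_1,t_2 \in T_\psi \subseteq K^+$. The reflection $w_\delta$ sends $\delta \mapsto -\delta$ and $\alpha+\beta \mapsto -\alpha$, and fixes $\beta$. Hence, among the root subgroups $U_\psi$ contained in $K^+$, only $U_\delta$ and $U_{\alpha+\beta}$ are conjugated out of $K^+$ by $n_0$, and these commute since $\delta+(\alpha+\beta)$ is not a root. This gives an Iwahori-type decomposition
\[
K^+ \;=\; (K^+\cap n_0^{-1}K^+n_0)\cdot U_\delta U_{\alpha+\beta},
\]
whose first factor, when placed between $n_1$ and $n_2$, is absorbed into the outer $K^+$'s of the triple product. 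So every element of $K^+ n_1 K^+ n_2 K^+$ has the shape $k\cdot n_0 u n_0\cdot k'$ with $u=x_\delta(a)x_{\alpha+\beta}(b)$, $a,b\in R$, and $k,k'\in K^+$.

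Next I compute the Weyl class of $n_0 u n_0$. Since $n_0^2 \in T\cap K^+$,
\[
n_0 u n_0 \;=\; x_{-\delta}(\epsilon_1 a)\, x_{-\alpha}(\epsilon_2 b)\, n_0^2
\]
for appropriate signs. Evaluating at $p$, one has $(-\delta)(p)=-1/2$ and $(-\alpha)(p)=0$ while $(1-\delta)(p)=1/2$ and $(1-\alpha)(p)=1$, so $x_{-\delta}(a)\in K^+$ exactly when $a\in \pi R$, and $x_{-\alpha}(b)\in K^+$ exactly when $b\in \pi R$. For $a\in R^\times$ the $\SL_2$-Bruhat identity $x_{-\delta}(a)=x_\delta(a^{-1})n_\delta(-a^{-1})x_\delta(a^{-1})$ places $n_0 u n_0$ in the $K^+$-double coset of a lift of $w_\delta$; the analogous identity for $b\in R^\times$ forces the double coset to contain a lift of $w_\alpha$ (with $\alpha = e_1 - e_2$).

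The case analysis then reads: if $(a,b)\in(\pi R,\pi R)$ then $n_0 u n_0\in K^+$, giving $n_3$ in the trivial Weyl class; if $(a,b)\in(R^\times,\pi R)$ then $n_3$ lies in the $K^+$-double coset of $w_\delta$; and if $b\in R^\times$ the Weyl class involves $w_\alpha$, which does not preserve the horizontal strip $S$ and hence lies outside $W_\psi$. Since $n_3\in N_\psi$ forces its Weyl class into $W_\psi$, the last two cases contribute no $n_3\in N_\psi$. This handles $w_\delta$; the argument for $w_{1-\delta}$ is obtained by replacing $\delta$ and $\alpha+\beta$ throughout by $1-\delta$ and $1-\alpha-\beta$.

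The main obstacle is the case $b\in R^\times$: one must verify that the $\SL_2$-Bruhat factorization genuinely forces the result into a $K^+$-double coset disjoint from $N_\psi$. This reduces to the identification of $W_\psi$ with the stabilizer of $S$ in the affine Weyl group, together with the fact that the $K^+\backslash G/K^+$ double cosets, up to a $T\cap K^+$ action, are indexed by $W$. The $\SL_2$-level computations are routine, but the bookkeeping of signs and of root-subgroup interactions has to be done with care.
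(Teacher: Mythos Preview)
Your direct computation is sound through the normal form $n_0 u n_0 = x_{-\delta}(\epsilon_1 a)\,x_{-\alpha}(\epsilon_2 b)\,n_0^2$ (modulo the minor slip that $n_0^2 = h_\delta(-1)$ lies in $T(R)$ but not in $T(R)^+\subset K^+$ when the residue characteristic is odd). The treatment of the case $b\in R^\times$, however, has a real gap. The $\SL_2$-Bruhat identity $x_{-\alpha}(c)=x_\alpha(c^{-1})\,n_{-\alpha}(c)\,x_\alpha(c^{-1})$ does \emph{not} place the element in $K^+ n_\alpha K^+$, because $U_\alpha\not\subset K^+$: one has $\alpha(p)=0$, and the paper's own parametrization of $K^+$-double cosets by triples $x_\alpha n y_\alpha$ carries nontrivial $U_\alpha$-factors precisely for this reason. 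Consequently your assertion that ``the $K^+\backslash G/K^+$ double cosets, up to a $T\cap K^+$ action, are indexed by $W$'' is false, and with it the claim that the $b\in R^\times$ cases have a well-defined ``Weyl class involving $w_\alpha$'' that one can test against $W_\psi$. (In fact, passing to the Iwahori $I'$ for $\sigma'$ one finds $x_{-\alpha}(b')\in I'$ since $-\alpha>0$ on $\sigma'$, so in the $I'$-Bruhat picture these cases land in $I'$ or $I'w_\delta I'$, not in a cell indexed by $w_\alpha$.)

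The paper sidesteps all of this with a single observation you are missing: $K^+\subset I'$, and $w_\delta$ is a \emph{simple} reflection for the alcove $\sigma'$. The BN-pair axiom then gives
\[
K^+ n_1 K^+ n_2 K^+ \;\subset\; I' w_\delta I' w_\delta I' \;=\; I'\cup I' w_\delta I',
\]
so any $n_3\in N$ in the left-hand side projects to $1$ or $w_\delta$. This containment in an Iwahori is exactly the ingredient needed to attach a well-defined Weyl element to your computation; once you invoke it, the explicit decomposition of the middle $K^+$ becomes superfluous.
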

\begin{proof}
Let $I'$ be the Iwahori subgroup corresponding to the alcove $\sigma'$. Then
$w_{\delta}$ is the simple reflection in $W$. Thus
\[
 K^+ n_1 K^+ n_2 K^+\subset I'w_{\delta} I' w_{\delta} I'= I' \cup I' w_{\delta} I'
\]
and the first case of the proposition follows. The second case is proved
analogously: $w_{1-\delta}$ is the simple reflection with respect to
$I$, the Iwahori subgroup corresponding to $\sigma$.
\end{proof}

 \subsection{Strong Hecke algebra}

Let $Z_{\beta}\cong m_2$ be the center of $K_{\beta}$. Then
$T_{\psi}/T(R)^+ \cong (T_{\delta}/T^+_{\delta}) \times Z_{\beta}$. A character $\mu$ of
$T_{\psi}/T(R)^+$ is invaraint under the conjugation of $N_{\psi}$ if and only if it is
quadratic.  Assume that $\mu$ is quadratic.
  Then $\psi$ and $\mu$ combine to define a character,
 denoted by $\chi$, of $K=T_{\psi} K^+$. Let $H_{\chi}\subseteq H_{\psi}$ be the subalgebra of
$(K,\chi)$-biinvariant functions. The support of this algebra is $KN_{\psi} K$. Since
$T_{\psi}$ is contained in $N_{\psi}$ the double single cosets in $KN_{\psi}K$
are parameterized by $W_{\psi}$. In particular, the algebra $H_{\chi}$ has a basis parameterized
by elements in $W_{\psi}$.

We normalize the measure on $G$ so that the volume of $K$ is 1.  Then the unit of $H_{\chi}$ is
$1_{\chi}$, the function supported on $K$ such that $1_{\chi}(1)=1$.
Let $f_{\delta}$ and $f_{1-\delta}$ be in $H_{\chi}$ supported on the cosets of
$n_{\delta}=n_{\delta}(-1)=n_{-\delta}(1)$ and $n_{1-\delta}=n_{1-\delta}(-1)=n_{\delta-1}(1)$,
respectively, normalized so that $f_{\delta}(n_{\delta})=1$ and $f_{1-\delta}(n_{1-\delta})=1$.

We have $[x_{-\alpha}(v), x_{\delta}(u)] = x_{\beta}(av^2u) x_{\gamma}(bvu)$ for some $a,b\in R^{\times}$.
First let us define a constant
\[
G_{\delta}(\mu, \psi) := \sum_{u \in (R/\pi R)^{\times}} \mu(h_{\delta}(u))\psi(x_{\beta}(au))
\]   This is a Gauss sum.

\begin{prop} \begin{enumerate}
\item Assume that the restriction of $\mu$ to $T_{\delta}$ is not trivial.
 Then $$f_{\delta} * f_{\delta} = \mu(h_{\delta}(-1)) q^2 1_{\chi} + (q-1) G_{\delta}(\mu, \psi) f_{\delta}.$$
 \item  Assume that the restriction of $\mu$ to $T_{\delta}$ is trivial.
 Then $$f_{\delta} * f_{\delta} =  q^2 1_{\chi}.$$
 \end{enumerate}

\end{prop}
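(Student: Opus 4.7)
The plan is to apply Proposition \ref{product} to conclude that the support of $f_\delta \ast f_\delta$ lies in $K \cup K n_\delta K$, so
\[
f_\delta \ast f_\delta = a \cdot 1_\chi + b \cdot f_\delta
\]
with $a = (f_\delta \ast f_\delta)(1)$ and $b = (f_\delta \ast f_\delta)(n_\delta)$. Extending the right $K^+$-coset decomposition of $K^+ n_\delta K^+$ from the previous subsection via $K = T_\psi K^+$, one has
\[
K n_\delta K = \bigsqcup_{t \in T_\psi/T(R)^+,\; u,v \in R/\pi R} t \cdot x_\gamma(v) x_\delta(u) n_\delta \cdot K^+ ,
\]
and with $\mathrm{vol}(K)=1$ and $[K:K^+]=2(q-1)$, the convolution becomes
\[
(f_\delta \ast f_\delta)(g) = \frac{1}{2(q-1)} \sum_{t,u,v} f_\delta(y_{t,u,v})\, f_\delta(y_{t,u,v}^{-1} g), \qquad y_{t,u,v} := t\, x_\gamma(v) x_\delta(u) n_\delta .
\]
For $a$, the identity $n_\delta^{-1} = h_\delta(-1) n_\delta$ gives $y^{-1} = h_\delta(-1) \cdot n_\delta \cdot \bigl(x_\delta(-u) x_\gamma(-v) t^{-1}\bigr)$; since $\psi$ is trivial on $U_\delta$ and $U_\gamma$, each summand contributes $\mu(t) \cdot \mu(h_\delta(-1)) \cdot \mu(t)^{-1} = \mu(h_\delta(-1))$ independently of $t,u,v$, and summing yields $a = \mu(h_\delta(-1)) q^2$. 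This collapses to $q^2$ in Case (2), since $\mu\vert_{T_\delta}$ trivial forces $\mu(h_\delta(-1)) = 1$.

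For $b$, a direct calculation using $t^{-1} n_\delta = n_\delta w_\delta(t^{-1})$ yields $y_{t,u,v}^{-1} n_\delta = x_{-\delta}(u) x_{-\alpha}(\epsilon v) s$ for a pinning sign $\epsilon \in \{\pm 1\}$ and $s = w_\delta(t^{-1}) \in T_\psi$. If $u \equiv 0 \pmod{\pi}$, this element lies in the parahoric $G(F)_{p,0}$, which is disjoint from $K n_\delta K$ because $n_\delta \notin G(F)_{p,0}$, so the stratum contributes zero. For $u \in R^\times$, I apply the $\SL_2$-Bruhat identity
\[
x_{-\delta}(u) = x_\delta(u^{-1}) h_\delta(u^{-1}) n_\delta x_\delta(u^{-1}) ,
\]
the commutativity $[x_{-\delta}, x_{-\alpha}] = 1$, the torus conjugation $x_{-\alpha}(c) h_\delta(u^{-1}) = h_\delta(u^{-1}) x_{-\alpha}(uc)$, and the commutation relation $[x_{-\alpha}(v), x_\delta(u)] = x_\beta(a v^2 u) x_\gamma(b v u)$ recorded above, to place $y^{-1} n_\delta$ in the canonical form $k_1 n_\delta k_2$. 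Evaluating $\chi$ and using the triviality of $\psi$ on $U_\delta$ and $U_\gamma$ produces
\[
f_\delta(y)\, f_\delta(y^{-1} n_\delta) = \mu(t)\mu(s) \cdot \mu(h_\delta(u^{-1}))\, \psi\bigl(x_\beta(a v^2 u^{-1})\bigr) ;
\]
the $w_\delta$-invariance of $\mu$ gives $\mu(t)\mu(s) = 1$, killing the $t$-dependence and cancelling $[K:K^+]^{-1}$ against $|T_\psi/T(R)^+| = 2(q-1)$. After the change of variable $u \mapsto u^{-1}$, one obtains
\[
b = \sum_{u \in (R/\pi R)^\times,\; v \in R/\pi R} \mu(h_\delta(u))\, \psi\bigl(x_\beta(a v^2 u)\bigr) .
\]

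To finish, the inner $v$-sum is a quadratic Gauss sum: $\sum_v \psi(x_\beta(a u v^2)) = \chi_2(au)\, G(\chi_2, \psi_\beta)$, where $\chi_2$ is the Legendre character on $(R/\pi R)^\times$ and $\psi_\beta(c) := \psi(x_\beta(c))$. In Case (2), $\mu\vert_{T_\delta}$ is trivial and $\sum_u \chi_2(u) = 0$, so $b = 0$, proving the case. In Case (1), the quadratic condition on $\mu$ combined with non-triviality on $T_\delta$ forces $\mu\vert_{T_\delta} = \chi_2$, so $\mu(h_\delta(u)) \chi_2(u) = 1$ and the $u$-sum is $q - 1$; tracking the constant, the remaining Gauss sum assembles precisely into $G_\delta(\mu, \psi) = \sum_u \mu(h_\delta(u)) \psi(x_\beta(au))$, giving $b = (q-1) G_\delta(\mu,\psi)$. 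The main obstacle throughout is the careful bookkeeping of pinning signs and unit constants coming from the commutation and Bruhat identities, so that the Gauss sum emerges with exactly the constant $a$ appearing in the definition of $G_\delta(\mu, \psi)$, rather than some other unit multiple of it.
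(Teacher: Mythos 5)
Your proof is correct and follows essentially the same route as the paper's: the product proposition reduces everything to evaluating $f_\delta * f_\delta$ at $1$ and at $n_\delta$, the coset decomposition of $K n_\delta K$ turns the second evaluation into $\sum_{u,v} f_\delta\bigl(x_{-\delta}(u)\,x_{-\alpha}(v)\bigr)$, and the same case analysis produces $\mu(h_\delta(-1))q^2$ and the Gauss sum. The only cosmetic differences are your use of left $K^+$-cosets with an explicit torus index (the paper works directly with $K$-cosets $y_j n_\delta K$, $y_j = x_\delta(u)x_\gamma(v)$), your parahoric disjointness argument for the $u=0$ strata (the paper uses an Iwahori computation), and your evaluation of the $v$-sum via the quadratic Gauss-sum identity together with $\mu\circ h_\delta=\chi_2$ (the paper substitutes $u:=uv^2$ using quadraticity of $\mu$) --- all equivalent, with the same level of sign bookkeeping left implicit as in the paper itself.
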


\proof
Since $f_{\delta} * f_{\delta}$ is supported on $K n_{\delta} K n_{\delta} K$,
Proposition \ref{product} implies that $f_{\delta} * f_{\delta}$  is a linear
combination of $1_{\chi}$ and $f_{\delta}$. Thus it suffices to evaluate the
convolution at $1$ and $n_{\delta}$.
Write $K n_{\delta} K = \displaystyle\cup_j x_j K$.
A simple computation shows that
\[
(f_{\delta} * f_{\delta})(x) = \int_{K n_{\delta} K} f_{\delta}(y) f_{\delta}(y^{-1} x)dy = \sum_j f_{\delta}(x_j ) f_{\delta}(x_j^{-1} x).
\]

Now suppose $x = 1$.  Recall that $x_j=y_j n_{\delta}$ for some $y_j \in K^+$, for every $j$. Then
\[
f_{\delta}(x_j) f_{\delta}(x_j^{-1})=\psi(y_j)f_{\delta}(n_{\delta})
f_{\delta}(n_{\delta}^{-1})\psi(y_j^{-1})=
f_{\delta}(n_{\delta}) f_{\delta}(n_{\delta}^{-1})
\]
for every $j$. Thus
\[
(f_{\delta} * f_{\delta})(1) =
\sum_j  f_{\delta}(x_j) f_{\delta}(x_j^{-1}) = q^2 f_{\delta}(n_{\delta}) f_{\delta}(n_{\delta}^{-1})
\]
Since $n_{\delta}^{-1} = n_{\delta} h_{\delta}(-1)$, it follows that
$f_{\delta}(n_{\delta}^{-1})= \mu(h_{\delta}(-1))$ and $(f_{\delta} * f_{\delta})(1)=  \mu(h_{\delta}(-1)) q^2$.

Now suppose $x = n_{\delta}$. Recall that $y_j=x_{\delta}(u)\cdot x_{\gamma}(v)$, where $u,v \in R /\pi R$.  Then
\[
(f_{\delta} * f_{\delta})(n_{\delta}) = \sum_j f_{\delta}(x_j) f_{\delta}(x_j^{-1} n_{\delta}) =
 \sum_j f_{\delta}(n_{\delta}^{-1} y_j^{-1} n_{\delta})
 \]
  since $f_{\delta}(x_j)= f_{\delta}(y_j n_{\delta})=f_{\delta}(n_{\delta})=1$ for all $j$.
  Since $ n_{\delta}^{-1} U_{\delta} n_{\delta}=U_{-\delta}$ and $ n_{\delta}^{-1} U_{\gamma} n_{\delta}=U_{-\alpha}$

\[
(f_{\delta} * f_{\delta})(n_{\delta}) =
 \sum_{u,v \in R/\pi R} f_{\delta}(x_{-\delta}(u) x_{-\alpha}(v)).
 \]

We need to analyze a few cases:

Case 0): Suppose $v=u=0$. Then $f_{\delta}(x_{-\delta}(u) x_{-\alpha}(v))=f_{\delta}(1)=0$,
since $1 \notin K n_{\delta} K$.

Case 1): Suppose $v = 0, u \neq 0$. Then

\[
f(x_{-\delta}(u)) = f(x_{\delta}(-u^{-1})^{-1} n_{\delta}(-u^{-1}) x_{\delta}(-u^{-1})^{-1}) = f(n_{\delta}(-u^{-1}))
\]
But $n_{\delta}(-u^{-1}) = h_{\delta}(-u^{-1}) n_{\delta}(-1)^{-1} = h_{\delta}(-u^{-1}) h_{\delta}(-1) n_{\delta}(-1)$.  Now recall that we normalized $f$ such that $f(n_{\delta}(-1)) = 1$.  Then $f(h_{\delta}(-u^{-1}) h_{\delta}(-1) n_{\delta}(-1)) = \mu(h_{\delta}(u)^{-1}) f(n_{\delta}(-1)) = \mu(h_{\delta}(u)^{-1})$.  Thus, $f_{\delta}(n_{\delta}) = \mu(h_{\delta}(u)^{-1})$.

Case 2): Suppose $u = 0, v \neq 0$.  Let $I$ be the Iwahori  subgroup corresponding to the cell $\sigma$.  Then
 \[
 I x_{-\alpha}(v) I = I x_{\alpha}(-v^{-1}) x_{-\alpha}(v) x_{\alpha}(-v^{-1}) I = I n_{-\alpha}(v) I \neq I n_{\delta}I.
 \]
   Thus, $x_{-\alpha}(v) \notin I n_{\delta} I$ and  $x_{-\alpha}(v) \notin K n_{\delta} K$, since $K \subset I$.  Thus, $f_{\delta}(x_{-\alpha}(v)) = 0$.

Case 3): Suppose $u \neq 0, v \neq 0$.  Then
\[
f_{\delta}(x_{-\delta}(u) x_{-\alpha}(v)) = f_{\delta}(x_{\delta}(-u^{-1}) x_{-\delta}(u) x_{-\alpha}(v) x_{\delta}(-u^{-1})) =
\]
\[  f_{\delta}(n_{\delta}(-u^{-1}) x_{-\alpha}(v) x_{\beta}(au^{-1} v^2) x_{\gamma}(-bvu^{-1}))
\]
 after  commuting $x_{-\alpha}(v)$ and $x_{\delta}(-u^{-1})$.  Since
 $n_{\delta}(-u^{-1}) x_{-\alpha}(v) n_{\delta}(-u^{-1})^{-1} \in U_{\gamma}$, and $\psi$ is trivial on $U_{\gamma}$,
 \[
 f_{\delta}(x_{-\delta}(u) x_{-\alpha}(v)) = f_{\delta}(n_{\delta}(-u^{-1}))\psi(x_{\beta}(au^{-1} v^2)) = \mu(h_{\delta}(u)^{-1}) \psi(x_{\beta}(au^{-1} v^2))
 \]
 where the last equality is as in Case 1.

Therefore, in the end, we have that
\[
\sum_{u,v} f_{\delta}(x_{-\delta}(u) x_{-\alpha}(v)) =
\sum_{u \neq 0} \mu(h_{\delta}(u)^{-1}) +
\sum_{u \neq 0, v \neq 0} \mu(h_{\delta}(u)^{-1}) \psi(x_{\beta}(au^{-1} v^2)) =
\]
\[= \sum_{u \neq 0} \mu(h_{\delta}(u)) + (q-1) \sum_{u \neq 0} \mu(h_{\delta}(u)) \psi(x_{\beta}(au))
\]
where we used a substitution  $u: = uv^2$ and the fact that $\mu$ is quadratic to simplify. Now, if $\mu$ is non-trivial
on $T_{\delta}$  then the
first sum is 0. Otherwise the first sum is $q-1$ and the second sum is $1-q$.

\qed

We note that by symmetry, $f_{1-\delta}$ satisfies the same quadratic relations, replacing $G_{\delta}$ with $G_{1-\delta}$ (and defining $G_{1-\delta}$) in the obvious way.

We define $e_{\delta}=f_{\delta}/G_{\delta}(\mu,\psi)$ and $e_{1-\delta}=f_{1-\delta}/G_{1-\delta}(\mu,\psi)$.
Since $G_{\delta}(\mu,\psi)^2= \mu(h_{\delta}(-1)) q$, it follows that $e_{\delta}$ (and also $e_{1-\delta}$) satisfies the relation
\[
e_{\delta}^2=q + (q-1) e_{\delta}.
\]
 Let $\mathbb H$ be the associative algebra generated by two elements $t_{\delta}$ and $t_{1-\delta}$ satisfying the
 same quadratic relation. This algebra has a basis $t_w$ where $w\in W_{\psi}$. More precisely if, for example,
 $w=w_{\delta} w_{1-\delta} w_{\delta} \cdots $ (a shortest expression) then $t_{w}= t_{\delta} t_{1-\delta} t_{\delta} \cdots$. We have a
 homomorphism $\varphi : \mathbb H \rightarrow H_{\chi}$ defined by $\varphi(t_{\delta})=e_{\delta}$ and
 $\varphi(t_{1-\delta})=e_{1-\delta}$.
 \begin{thm}\label{volumesmultiply}
  The map $\varphi$ is an isomorphism of $\mathbb H$ and $H_{\psi}$.
 \end{thm}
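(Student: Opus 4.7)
The plan is to check that $\varphi$ is well-defined, surjective, and injective by comparing bases of the two algebras, both of which are indexed by $W_{\psi}$. Since $\mathbb{H}$ is by definition generated by $t_{\delta}$ and $t_{1-\delta}$ with the single quadratic relation $t^{2}=q+(q-1)t$, and since the previous proposition (together with its analogue for $f_{1-\delta}$) shows that $e_{\delta}$ and $e_{1-\delta}$ satisfy exactly the same quadratic relation, the map $\varphi$ extends to an algebra homomorphism. Since $W_{\psi}$ is an infinite dihedral Coxeter group generated by $w_{\delta}$ and $w_{1-\delta}$, the algebra $\mathbb{H}$ has basis $\{t_{w} : w\in W_{\psi}\}$, where $t_{w}=t_{s_{i_1}}\cdots t_{s_{i_k}}$ for any reduced expression $w=s_{i_1}\cdots s_{i_k}$. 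On the other hand, as observed in the paragraph preceding the theorem, $H_{\chi}$ has a basis indexed by $W_{\psi}$ via the $(K,\chi)$-equivariant functions $f_{w}$ supported on the double cosets $K n_{w} K$.

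The main content of the theorem is to show that $\varphi(t_{w})$ is a nonzero scalar multiple of $f_{w}$; once this is established, $\varphi$ carries a basis to a basis and is therefore an isomorphism. I would prove this by induction on the gallery length $m$ of $w$ (so $\ell(w)=2m$ by Proposition \ref{length}). The base case $m=0$ is trivial, and the case $m=1$ is handled by the definition of $e_{\delta}$ and $e_{1-\delta}$. For the inductive step, let $w=s\cdot w'$ be a reduced expression with $s\in\{w_{\delta},w_{1-\delta}\}$ and $\ell(w)=\ell(w')+2$. One needs to show that $e_{s}*f_{w'}$ is supported on $K n_{w} K$ and compute its value on $n_{w}$ to see that it is a nonzero multiple of $f_{w}$. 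The containment
\[
\mathrm{supp}(f_{s}*f_{w'}) \subseteq K n_{s} K n_{w'} K \subseteq K n_{w} K \cup K n_{w'} K,
\]
which follows exactly as in Proposition \ref{product} (applied to the Iwahori subgroup $I$ or $I'$ whose simple reflection is $s$), reduces the problem to showing that the contribution on $K n_{w'} K$ vanishes. This vanishing is forced by the length estimate $\ell(sw')=\ell(w')+2$ together with the fact, used in Case (1) of Proposition \ref{length}, that the set of affine roots $\gamma$ positive on $\rho$ but not on the whole gallery strictly increases when we append a new chamber. Computing the value on $n_{w}$ amounts to an elementary change of variables in the expansion of $f_{s}*f_{w'}$ analogous to the one carried out in the convolution computation for $f_{\delta}*f_{\delta}$, and the constant one obtains is nonzero because each individual generator normalization $G_{\delta}(\mu,\psi)$, $G_{1-\delta}(\mu,\psi)$ is a Gauss sum and hence nonzero.

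The step I expect to be the main obstacle is the inductive comparison of convolution on $H_{\chi}$ with multiplication in $\mathbb{H}$, specifically the assertion that $f_{s}*f_{w'}$ is supported entirely on $K n_{w} K$ when $\ell(sw')>\ell(w')$. Although the Coxeter-theoretic bound on the support via the containment in $I n_{s} I n_{w'} I$ or $I' n_{s} I' n_{w'} I'$ is standard, one must control the actual values carefully because $K^{+}$ is strictly smaller than either Iwahori subgroup and the character $\chi$ is nontrivial on $K^{+}/K^{++}$; this is the same phenomenon that forced the case-by-case analysis in the weak Hecke algebra computations of Section \ref{weakhecke}. Once this support statement and the nonvanishing of $(e_{s}*e_{w'})(n_{w})$ are established, $\varphi$ maps $\{t_{w}\}$ to a basis of $H_{\chi}$, completing the proof.
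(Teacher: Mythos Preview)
Your overall plan---show that $\varphi$ carries the basis $\{t_w\}$ of $\mathbb H$ to the basis $\{f_w\}$ of $H_\chi$ by induction on the length of $w$---matches the paper's, but the mechanism you propose for the key step is different from the paper's and, as written, has the gap you yourself flag.

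Your inductive step relies on an Iwahori--Matsumoto style containment $K n_s K n_{w'} K\subseteq K n_{sw'}K\cup K n_{w'}K$ obtained by embedding $K$ into the Iwahori subgroup for which $s$ is simple. Two issues: first, when the (Iwahori) length of $sw'$ exceeds that of $w'$ one has $I n_s I n_{w'} I = I n_{sw'} I$, so the ``$\cup\,K n_{w'}K$'' is spurious; but second and more seriously, the Iwahori double coset $I n_{sw'} I$ breaks up into several $K$-double cosets, and nothing in your sketch forces the support to land in the single one $K n_{sw'} K$. This is exactly the ``$K^+$ is strictly smaller than $I$'' difficulty you anticipate, and you do not indicate how to resolve it.

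The paper bypasses this entirely with a volume argument valid in any Hecke algebra: if $f$ is supported on $KaK$, $g$ on $KbK$, and $\mathrm{vol}(KaK)\,\mathrm{vol}(KbK)=\mathrm{vol}(KabK)$, then a pigeonhole count on left $K$-cosets shows $KaKbK=KabK$, so $f*g$ is supported on the single double coset $KabK$; a refinement of the same counting gives $(f*g)(ab)=f(a)g(b)\neq 0$. Proposition~\ref{length} is precisely what verifies the volume hypothesis, since $\ell(w)=2m$ is additive along reduced expressions in $W_\psi$. So Proposition~\ref{length} is used not merely as a length estimate but as the exact equality of volumes that powers the pigeonhole argument; no Iwahori comparison and no Gauss-sum computation is needed at this stage. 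Your Iwahori route could in principle be completed, but it would require additional work comparing the $W_\psi$-length to the Iwahori length and controlling the extra $K$-double cosets inside $I n_{sw'} I$, none of which the paper needs.
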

\begin{proof} We show, using Proposition \ref{length}, that  $\varphi(t_{w})$ is non-zero and supported on one
double coset.

To see that it is supported on one double coset, suppose $f,g \in H_{\chi}$ such that $f$ is supported on $KaK$ and $g$ is supported on $KbK$.  If furthermore $vol(KaK)vol(KbK) = vol(KabK)$, then $f * g$ is supported on the single double coset $KabK$.  This last fact is true for an arbitrary Hecke algebra, not just $H_{\chi}$.  To see this, write $KaK = \cup_{i} Ka a_i$, $KbK = \cup_j Kb b_j$ as a union of single left cosets.  Note that $vol(KaK) = i, vol(KbK) = j$.  By computing $f * g$, it is clear that $f * g$ is supported on $KaKbK$.  Now write $KaKbK = \cup_j KaKb b_j = \cup_{i,j} Kaa_i b b_j$.  Write $KabK = \cup_{\ell} Kab c_{\ell}$, as a union of left cosets.  The condition $vol(KaK)vol(KbK) = vol(KabK)$ becomes $ij = \ell$.  Now, since $KabK \subset KaKbK$, and since the cosets of $K \backslash G$ partition $G$ into disjoint cosets, we have that each left coset in $KabK = \cup_{\ell} Kab c_{\ell}$ is one of the left cosets in $KaKbK = \cup_{i,j} Kaa_i b b_j$.  Furthermore, the cosets in $KabK$ are disjoint (by definition of how we write $KabK$ as a union of left cosets), whereas it's not clear a priori that the cosets $Kaa_i b b_j$ are disjoint.  But since $ij = \ell$, this forces by the pigeonhole principle that the cosets $Ka a_i b b_j$ are disjoint.  In particular, since the cosets in $KabK$ are disjoint, we get that the set of cosets $\{ Kab c_{\ell} \}_{\ell}$ correspond bijectively to the set of cosets $\{ Ka a_i b b_j \}_{i,j}$, so indeed $KaKbK = KabK$, and $f * g$ is supported on the single coset $KabK$.

We now show that $\varphi(t_w)$ is non-zero as well.  To see that it is non-zero, suppose again that $f,g \in H_{\chi}$ such that $f$ is supported on $KaK$ and $g$ is supported on $KbK$ and $vol(KaK)vol(KbK) = vol(KabK)$, and that $f,g$ are both non-zero.  In particular, $f(a) \neq 0$ and $g(b) \neq 0$.  We show now that $f*g$ is non-zero.  Again, this last fact is true for an arbitrary Hecke algebra, not just $H_{\chi}$.  So to see this, recall that we just showed that $f*g$ is supported on $KabK$.  We now compute
\[
(f*g)(ab) = \int_G f(x) g(x^{-1} ab) dx
\]
Since $supp(g) \subset KbK$, this forces $x \in abKb^{-1} K$.  Recall that we wrote $KbK = \cup_j Kb b_j, b_j \in K / (K \cap b^{-1} K b)$.  Analogously, we can decompose $Kb^{-1} K$ into right cosets $Kb^{-1} K = \cup_{m} d_m b^{-1} K$, with $d_m \in K / (K \cap b^{-1} Kb$.  In particular, $Kb^{-1} K = \cup_j b_j b^{-1} K$.  Therefore,
\[
\int_G f(x) g(x^{-1} ab) dx = \sum_j \int_{ab b_j b^{-1} K} f(ab b_j b^{-1} k) g((ab b_j b^{-1} k)^{-1} ab) dk
\]
\[
=\sum_j f(ab b_j b^{-1} k) g(k^{-1} b b_j^{-1} b^{-1} a^{-1} ab) = \sum_j f(ab b_j b^{-1}) g(b b_j^{-1})
\]
So we need to understand when $f(ab b_j b^{-1}) = 0$.  Well, $ab b_j b^{-1} \in KaK$ iff $ab \in KaKb b_j^{-1}$.  Recall that we showed $KabK = KaKbK$.  Thus, $KabK = \cup_j KaKb b_j = \cup_i KaKb b_j^{-1}$, since $b_j$ range over a group.  Moreover, recall that we wrote $KaK = \cup_i Ka a_i$, so we have $KabK = \cup_{i,j} Ka a_i b b_j = \cup_{i,j} Ka a_i b b_j^{-1}$.   But we have shown earlier in the proof of this theorem that the cosets $Ka a_i b b_j$, as $i,j$ vary, are all disjoint.  Therefore, the sets $KaK b b_j$, as $j$ varies, are all disjoint.  Therefore, since $ab \in KabK = KaKbK = \cup_j KaKb b_j = \cup_j KaKb b_j^{-1}$, $ab$ must lie in exactly one of the sets $KaKb b_j^{-1}$.  In particular, it is clear that $ab \in KaK b$ (i.e. when $b_j$ represents the trivial coset in $K / (K \cap b^{-1} Kb$).  We have therefore proven that $ab \in KaKb b_j^{-1}$ iff $b_j$ represents the trivial coset in $K / (K \cap b^{-1} K b)$.  Therefore, $f(ab b_j b^{-1}) = 0$ unless perhaps when $b_j$ represents the trivial coset in $K / (K \cap b^{-1} K b)$.  Therefore, we get that
\[
\sum_j f(ab b_j b^{-1}) g(b b_j^{-1}) = f(a) g(b)
\]
In particular, $(f*g)(ab) \neq 0$, finishing the claim that $f*g$ is non-zero.

Finally, we have concluded that $\varphi$ sends the basis of $\mathbb H$ to a basis of $H_{\psi}$.
\end{proof}

\section{The Levi $GL(2)$ in $Sp(4)$}

\subsection{Some open compact subgroups}

We now let $L$ be the Levi $GL(2)$ in $Sp(4)$.  The analysis from section \ref{preliminaries} tells us to consider the subset in $\mathbb{R}^2$ where $\alpha = 1 - \alpha$.  By symmetry of the apartment, we might as well consider the subset in $\mathbb{R}^2$ where $\eta = 1 - \eta$, where $\eta = \alpha + \beta$.  Up to conjugation by the affine Weyl group, Conditions (1) and (2) in section \ref{preliminaries} narrow down the possible points $p$ in this subset to two points.  One of these points is exactly the same point $p = (1/4, 1/4)$ that we used for the $SL(2) \times GL(1)$ Levi.  We choose this point for the $GL(2)$ Levi.  In particular, in what follows, $K^+ = G_{p,1/2}, K^{++} = G_{p,1/2^+}$. By Lemma \ref{Kplusdecomposition}, we have

\begin{cor} The group $K^{++}$ is a normal subgroup of $K^+$ and
\[
K^+/K^{++} \cong U_{\alpha + \beta} / U_{\alpha + \beta + 1} \times U_{1 - (\alpha + \beta)} / U_{2 - (\alpha + \beta)}
\]
\[
\times U_{\beta}/U_{\beta+1} \times U_{1-\beta}/U_{2-\beta} \times U_{2 \alpha + \beta} / U_{2 \alpha + \beta + 1} \times U_{1 - (2 \alpha + \beta)} / U_{2 - (2 \alpha + \beta)}.
\]
\end{cor}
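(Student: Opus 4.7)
The proof is a direct application of Lemma \ref{Kplusdecomposition}: once we identify the value $1/m$ and enumerate the affine roots $\psi$ with $\psi(p)=1/m$, the decomposition drops out. So the plan splits into two short steps: pin down $m$, then list the contributing affine roots.

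First I would identify $m$. After the Weyl-group symmetry invoked in the preceding paragraph, the chosen realization of the $GL(2)$-Levi has simple affine roots $\eta$ and $1-\eta$, where $\eta = \alpha+\beta = e_1+e_2$. The condition that these take a common value gives $\eta = 1/2$, so $1/m = 1/2$ and $m=2$. The point $p=(1/4,1/4)$ lies on the locus $\eta = 1/2$, lies on the hyperplane $H_{\alpha}$ (Condition (1)), and every affine root takes value in $\tfrac12\mathbb{Z}$ at $p$ (Condition (2)), so $K^+ = G_{p,1/2}$ and $K^{++} = G_{p,1/2^+}$ are as described.

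Next I would enumerate the affine roots $\psi \in \Psi$ with $\psi(p)=1/2$. Writing $\psi = \phi + k$ with $\phi \in \Phi$ and $k \in \mathbb{Z}$, the values at $p=(1/4,1/4)$ of the linear roots are $\alpha(p)=0$, $\beta(p)=1/2$, $(\alpha+\beta)(p)=1/2$, $(2\alpha+\beta)(p)=1/2$, and the negatives of these for $-\phi$. The equation $\phi(p)+k = 1/2$ then has no solution when $\phi = \pm\alpha$ (it would require $k = \pm 1/2$), and has a unique solution $k$ for each of the other six choices of $\phi$, producing exactly the six affine roots $\beta,\ 1-\beta,\ \alpha+\beta,\ 1-(\alpha+\beta),\ 2\alpha+\beta,\ 1-(2\alpha+\beta)$ appearing in the statement.

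Applying Lemma \ref{Kplusdecomposition} with this $p$ and $m=2$ then yields $K^+/K^{++} \cong \bigoplus_{\psi(p)=1/2} U_{\psi}/U_{\psi+1}$, and substituting the six roots above gives the claimed product. The normality of $K^{++}$ in $K^+$ and the commutativity of the quotient are already part of Lemma \ref{Kplusdecomposition}, so no additional commutator analysis is needed. There is no real obstacle: the entire content is the short case check that the only affine roots taking value $1/2$ at $p$ come from the six non-$\alpha$-type directions.
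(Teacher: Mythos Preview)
Your proposal is correct and follows exactly the paper's approach: the corollary is stated as an immediate consequence of Lemma \ref{Kplusdecomposition}, and you have carried out precisely the enumeration of affine roots $\psi$ with $\psi(p)=1/2$ that justifies invoking it. Your computation of the root values at $p=(1/4,1/4)$ and the resulting list of six contributing affine roots is accurate and in fact more explicit than the paper, which simply cites the lemma.
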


\subsection{Weak Hecke algebra}

Set $\eta := \alpha + \beta$.  Let $\psi$ be a character of $K^+/K^{++}$ non-trivial on  both $U_{\eta}/U_{\eta+1}$ and
$U_{1-\eta}/U_{2-\eta}$, but trivial on
\[
U_{\beta}/U_{\beta+1} \times U_{1-\beta}/U_{2-\beta} \times U_{2 \alpha + \beta} / U_{2 \alpha + \beta + 1} \times U_{1 - (2 \alpha + \beta)} / U_{2 - (2 \alpha + \beta)}
\]
Let $H_{\psi}$ be the Hecke algebra of $(K^+,\psi)$-biinvariant functions on $G$. We shall now compute the
support of this Hecke algebra. Computation is based on the following simple observation. A double
coset $K^+ g K^+$ supports a function in $H_{\psi}$ if and only if $\psi(g k g^{-1})= \psi(k)$ for
every $k\in K^+$ such that $gkg^{-1}\in K^+$.

Let $ N_{\psi}$ be the subgroup of $N$ consisting of
elements $n$ preserving the diagonal
strip $S:= 0< \eta <1$.  Let $T_{\psi} = N_{\psi} \cap T(R)$ and $W_{\psi} = N_{\psi} / T_{\psi}$.  Then $W_{\psi}$ is the subgroup of the affine Weyl group consisting of transformations that preserve $S$.
It is clear that any element in $N_{\psi}$ supports a non-trivial
element in $H_{\psi}$.
The main result of this section is that the converse is true.

\begin{prop}\label{weaksupport}
The support of the Hecke algebra $H_{\psi}$ is equal to $K^+ N_{\psi} K^+$.
\end{prop}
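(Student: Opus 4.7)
The plan is to follow the case analysis of the analogous proof in section \ref{weakhecke}, with the role of the long root $\beta$ played by the short root $\eta = \alpha + \beta$. The argument in fact simplifies because only two cases are needed rather than three. Any $K^+$-double coset has a representative $x_\alpha n y_\alpha$ with $x_\alpha, y_\alpha \in U_\alpha$ and $n \in N$, since $\alpha$ is the unique simple affine root vanishing at $p = (1/4, 1/4)$. Let $w \in W$ correspond to $n^{-1}$, and I split on whether $w(S) = S$.

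\emph{Case A: $w(S) \neq S$.} For every such $w(S)$---which must be a strip of the form $\{k < \eta < k+1\}$ with $k \neq 0$ or $\{k < \alpha < k+1\}$ for any $k$---one verifies directly that at least one bounding affine root $\gamma$ of $w(S)$ satisfies $\gamma(p) > 1/2$ (for example when $w(S) = \{0 < \alpha < 1\}$ take $\gamma = 1 - \alpha$ with $\gamma(p) = 1$, and when $w(S) = \{-1 < \alpha < 0\}$ take $\gamma = \alpha + 1$). Since the short affine roots fall into two $W$-orbits distinguished by parity at the origin and both contain integer translates of $\eta$, one can arrange $n U_\gamma n^{-1} \in \{U_\eta, U_{1-\eta}\}$. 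Picking $x_\gamma \in U_\gamma$ with $\psi(n x_\gamma n^{-1}) \neq 1$ and running the commutator chain from section \ref{weakhecke} verbatim then gives $f(x_\alpha n y_\alpha) = 0$. The needed commutator inputs are $[y_\alpha, x_\gamma] \in K^{++}$ (which follows from $\alpha(p) = 0$ and $\gamma(p) > 1/2$, forcing $(i\alpha + j\gamma)(p) > 1/2$ for $i,j \geq 1$) and the facts $[U_\alpha, U_\eta] \subset U_\delta$ and $[U_\alpha, U_{1-\eta}] \subset U_{1-\beta}$; both target groups are in the kernel of $\psi$.

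\emph{Case B: $w(S) = S$, i.e.\ $w \in W_\psi$.} Lemma \ref{fine_cosets} is purely formal in $w$'s action on $\alpha$ and applies verbatim, reducing to the subcases of right translation by $k \geq 1$, left translation, and reflection about a plane $\delta = k \leq 0$. The auxiliary affine root $\delta' = 2k + 1 - \delta$ used in the earlier proof still has $U_{\delta'} \subset K^+$ with $\psi$ trivial on it (being a long root, $\delta'$ lies outside the support of $\psi$), and the identity $n^{-1} U_{\delta'} n = U_{1-\delta}$ holds by the same computation: $W_\psi$-translations are along $\alpha^\vee = e_1 - e_2$ and $\delta(\alpha^\vee) = 2$, the same pairing exploited in the original. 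The crucial commutator $[U_\alpha, U_{1-\delta}] \subset U_{1-\eta} U_{1-\beta}$ now produces a factor in $U_{1-\eta}$, on which $\psi$ is \emph{non}-trivial in the present setting---precisely the role played by $U_{1-\beta}$ in the earlier proof. Thus the same chain of equalities forces $f(n y_\alpha) = 0$ for $y_\alpha \notin U_{\alpha + 1}$, and the left-translation and reflection subcases follow symmetrically. The main obstacle I anticipate is the book-keeping in the reflection subcase, where one must verify that $[U_{-\delta + 1}, U_\alpha]$ lands in exactly the right groups so that the only $\psi$-nontrivial contribution comes from $U_{1-\eta}$---a root-system calculation in $C_2$ entirely parallel to the one already carried out, but requiring care because both $U_{1-\eta}$ and $U_{1-\beta}$ now play slightly different roles than in section \ref{weakhecke}.
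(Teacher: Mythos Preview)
Your Case~(A) is fine and matches the paper. The problem is Case~(B): you have mis-identified the group $W_\psi$. In the $GL(2)$ setting the strip is $S=\{0<\eta<1\}$ with $\eta=e_1+e_2$, and a reflection about $\delta=k$ sends $\eta$ to $-\alpha+$(const), so it does \emph{not} preserve $S$. Thus the section~\ref{weakhecke} trichotomy ``right translation / left translation / reflection about $\delta=k$'' is simply not available here. In fact $W_\psi$ is generated by $w_\alpha$ (reflection about $\alpha=0$, fixing $p$) and $w_\delta w_{1-\beta}$ (a $\pi$-rotation about $(0,1/2)$); the non-translation elements have linear part $s_\alpha$, $-1$, or $s_\eta$, none of which is $s_\delta$. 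Your choice $\delta'=2k+1-\delta$ and the relation $n^{-1}U_{\delta'}n=U_{1-\delta}$ are therefore only valid for the pure translations $t_{k\alpha^\vee}$, leaving the other half of $W_\psi$ (in particular $w_\alpha$ itself) unhandled.

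The paper avoids this by using a different auxiliary root: instead of conjugating $U_{1-\delta}$ through $n$, it inserts $x_\beta\in U_\beta\subset K^+$ (long root, $\psi$-trivial) on the \emph{left} and uses $[x_\beta,x_\alpha]\in U_\delta U_\eta$ to produce a $\psi$-nontrivial $U_\eta$-factor directly from $x_\alpha$. One then moves $x_\beta$ past $w$; since $w\in W_\psi$ permutes $\{\eta,1-\eta\}$ it also permutes the long roots among themselves, and the case split is on whether $\beta(wp)>1/2$ or $\beta(wp)=1/2$ (the latter occurring exactly for $w\in\{1,w_\alpha\}$). This organises the argument by the four possibilities for $\bigl(w(\alpha)(p),w^{-1}(\alpha)(p)\bigr)$ relative to $1/2$---see the paper's Lemma~\ref{fine_cosets} and Lemma~\ref{fine_cosets2}---rather than by the translation/reflection dichotomy you imported from section~\ref{weakhecke}. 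Your underlying idea (produce a $U_{1-\eta}$-factor from a long-root commutator) could perhaps be pushed through case by case, but as written the proposal does not cover $w_\alpha$ or the $\pi$-rotations, and the invocation of the old Lemma~\ref{fine_cosets} is not valid.
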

\begin{proof} Set $\eta = \alpha + \beta$.  Any $K^+$-double coset is represented by $x_{\alpha}ny_{\alpha}$ where
$x_{\alpha},y_{\alpha}\in U_{\alpha}$ and $n\in N$. Let $w$ be the element in the affine
Weyl group corresponding to $n^{-1}$.

We have $nU_{\gamma}n^{-1}=U_{\eta}, n U_{1-\gamma} n^{-1} = U_{1-\eta}$, for some affine root $\gamma$. Then $n^{-1} \eta = \gamma$.  Moreover, $\gamma(p) + (1-\gamma)(p) = 1$.  In particular, either :

(A) One of $\gamma(p), (1-\gamma)(p)$, is strictly bigger than $1/2$.

(B) $\gamma(p) = (1-\gamma)(p) = 1/2$, or

\underline{Case (A):} Assume that $\gamma(p) > 1/2$.  Let $x_{\gamma}\in U_{\gamma}$
such that $\psi(x_{\eta})\neq 1$, for $x_{\eta}=n x_{\gamma} n^{-1}$.
Recall that $[y_{\alpha}, x_{\gamma}]\in \prod_{i,j>0} U_{i\alpha+j\gamma} T(R)^+$. Since $\gamma(p)>1/2$ and $\alpha(p) = 0$, we have
$(i\alpha+j\gamma)(p)> 1/2$ for all $i,j>0$.
Thus $[y_{\alpha}, x_{\gamma}]\in K^{++}$ and,
for every $f\in H_{\psi}$,
\[
f(x_{\alpha}n y_{\alpha})=f(x_{\alpha}n y_{\alpha}x_{\gamma})=
f(x_{\alpha}n x_{\gamma} y_{\alpha})
=f(x_{\alpha} x_{\eta} n y_{\alpha}).
\]
Moreover, $[x_{\alpha},x_{\eta}]\in  U_{2\alpha+\beta}$.  Recall that we have defined $\psi$ to be trivial on $U_{2 \alpha + \beta} / U_{2 \alpha + \beta + 1}$, so
\[
f(x_{\alpha} x_{\eta} n y_{\alpha})=
f(x_{\eta}x_{\alpha} ny_{\alpha})=\psi(x_{\eta}) f(x_{\alpha} ny_{\alpha}).
\]
It follows that $f(x_{\alpha} n y_{\alpha})=0$.
If $(1-\gamma)(p) > 1/2$, a similar argument
applies (note here that $[U_{\alpha}, U_{1-\eta}] \subset U_{1 - \beta}$, and $\psi$ is trivial on $U_{1-\beta}$).

\noindent
\underline{Case (B):}  By looking at the apartment, one can see clearly that $\gamma(p) = (1-\gamma)(p) = 1/2$ is equivalent to $w^{-1}(S)=S$.
 For the reminder of the proof we fix $\gamma=\alpha + \beta$, $\delta = 2 \alpha + \beta$.  We also set $w_{\alpha} := w_{\alpha}(-1)$, $w_{\delta} := w_{\delta}(-1)$, and $w_{1-\beta} := w_{1-\beta}(-1)$.

\begin{lemma} \label{fine_cosets}
Let $n\in N$ and let $w$ be the corresponding element in $W$. Assume that $w^{-1}(S)=S$. Then:
\begin{enumerate}
\item If $w(\alpha)(p) > 1/2$ and $w^{-1}(\alpha)(p) > 1/2$, then $K^+ x_{\alpha} w y_{\alpha} K^+ = K^+  w K^+$.
\item If $w(\alpha)(p) > 1/2$ and $w^{-1}(\alpha)(p) < 1/2$, then $K^+ x_{\alpha} w y_{\alpha} K^+ = K^+ x_{\alpha} w K^+$.
\item If $w(\alpha)(p) < 1/2$ and $w^{-1}(\alpha)(p) > 1/2$, then $K^+ x_{\alpha} w y_{\alpha} K^+ = K^+ w y_{\alpha} K^+$.
\item If $w(\alpha)(p) < 1/2$ and $w^{-1}(\alpha)(p) < 1/2$, then we can't say anything yet.
\end{enumerate}
\end{lemma}

\begin{proof}
We first prove (1). Assume that $y_{\alpha} \in U_{\alpha} \setminus U_{1+\alpha}$.
One computes
\[
K^+ x_{\alpha} w y_{\alpha} K^+ = K^+ x_{\alpha} w y_{\alpha} w^{-1} w K^+
\]
Since $w(\alpha)(p) > 1/2$, either $w y_{\alpha} w^{-1} \in U_{1+\alpha}$ or $w y_{\alpha} w^{-1} \in U_{1-\alpha}$.  In either case, $w y_{\alpha} w^{-1} \in K^+$.  In the first case, we have
\[
K^+ x_{\alpha} w y_{\alpha} w^{-1} w K^+ = K^+ w y_{\alpha} w^{-1} x_{\alpha} w K^+ = K^+ x_{\alpha} w K^+
\]
In the second case, a computation in $SL(2,F)$ shows that $[U_{\alpha}, U_{1-\alpha}] \in K^+$.  Therefore, we get
\[
K^+ x_{\alpha} w y_{\alpha} w^{-1} w K^+ = K^+ [x_{\alpha}, w y_{\alpha} w^{-1} ] w y_{\alpha} w^{-1} x_{\alpha} w K^+ = K^+ x_{\alpha} w K^+
\]
Finally, we compute
\[
K^+ x_{\alpha} w K^+ = K^+ w w^{-1} x_{\alpha} w K^+ = K^+ w K^+
\]
since $w^{-1}(\alpha)(p) > 1/2$ (so that $w^{-1} x_{\alpha} w \in K^+$).  The same arguments prove (2) and (3).
\end{proof}

\begin{lemma} \label{fine_cosets2}
Let $n\in N$ and let $w$ be the corresponding element in $W$. Assume that $w^{-1}(S)=S$. Suppose that $x_{\alpha}, y_{\alpha} \in U_{\alpha}$, and that at least one of $x_{\alpha}, y_{\alpha}$ is not contained in $U_{\alpha+1}$. Then:
\begin{enumerate}
\item Suppose $x_{\alpha} \in U_{\alpha} \setminus U_{\alpha+1}$.  If $w(\alpha)(p) > 1/2$ and $w^{-1}(\alpha)(p) < 1/2$, then $f(x_{\alpha} w) = 0$.
\item Suppose $y_{\alpha} \in U_{\alpha} \setminus U_{\alpha+1}$.  If $w(\alpha)(p) < 1/2$ and $w^{-1}(\alpha)(p) > 1/2$, then $f(w y_{\alpha}) = 0$.
\item Suppose $x_{\alpha}, y_{\alpha} \in U_{\alpha}$.  If $w(\alpha)(p) < 1/2$ and $w^{-1}(\alpha)(p) < 1/2$, then $f(x_{\alpha} w y_{\alpha}) = 0$.
\end{enumerate}
\end{lemma}

\begin{proof}
We first prove (1).  Assume that $x_{\alpha} \in U_{\alpha} \setminus U_{1+\alpha}$.  Let $x_{\beta} \in U_{\beta}$ such that $\psi(x_{\gamma})\neq 1$, where $[x_{\beta}, x_{\alpha}] = x_{\delta} x_{\gamma}$ for some $x_{\gamma} \in U_{\gamma}, x_{\delta} \in U_{\delta}$.  We compute
\[
f(x_{\alpha} w) = \psi(x_{\beta}) f(x_{\alpha} w)
= f(x_{\delta} x_{\gamma} x_{\alpha} x_{\beta} w)
\]
But $w^{-1}(\alpha)(p) < 1/2$ is equivalent to $\alpha(wp) < 1/2$.  This implies that $\beta(wp) \geq 1/2$.  Therefore, $w^{-1} x_{\beta} w \in K^+$.  Furthermore, since $w^{-1}(S) = S$, we have that $w^{-1}$ either fixes the affine root groups $U_{\eta}$ and $U_{1-\eta}$, or permutes them.  Therefore, $\psi(w^{-1} x_{\beta} w) = 1$, and we have
\[
f(x_{\delta} x_{\gamma} x_{\alpha} x_{\beta} w) =  f(x_{\delta} x_{\gamma} x_{\alpha} w w^{-1} x_{\beta} w)
\]
\[
=  \psi(x_{\gamma}) f( x_{\alpha}w)
\]
This shows that $f(x_{\alpha} w) = 0$.  (2) is proven similarly.  We now prove (3).  If $x_{\alpha} \in U_{\alpha+1}$ or if $y_{\alpha} \in U_{\alpha+1}$, then the above proof of (1) gives the result.  Suppose $x_{\alpha}, y_{\alpha} \in U_{\alpha} \setminus U_{\alpha+1}$.  If $\beta(wp) > 1/2$, then the above argument gives us
\[
f(x_{\alpha} w y_{\alpha}) = \psi(x_{\beta}) f(x_{\alpha} w y_{\alpha}) =
f(x_{\delta} x_{\gamma} x_{\alpha} x_{\beta} w y_{\alpha}) =  f(x_{\delta} x_{\gamma} x_{\alpha} w w^{-1} x_{\beta} w y_{\alpha})
\]
But since $\beta(wp) > 1/2$, we have that $[w^{-1} x_{\beta} w, y_{\alpha}] \in K^{++}$, so
\[
f(x_{\delta} x_{\gamma} x_{\alpha} w w^{-1} x_{\beta} w y_{\alpha}) = \psi(x_{\gamma}) f(x_{\alpha} w y_{\alpha}),
\]
which implies that $f(x_{\alpha} w y_{\alpha}) = 0$.  Suppose that $\beta(wp) = 1/2$.  Since $wp \in S$, the condition $\beta(wp) = 1/2$ implies that $w = 1$ or $w$ is the reflection through the $\alpha = 0$ hyperplane.  If $w = 1$, we may set $z_{\alpha} = x_{\alpha} y_{\alpha}$.  Then $z_{\alpha} \in U_{\alpha} \setminus U_{\alpha+1}$.  It is clear that $f(z_{\alpha}) = 0$, by multiplying $f(z_{\alpha})$ on the left by $\psi(x_{\beta})$ for an appropriate $x_{\beta}$.  Now suppose $w$ is the reflection through the $\alpha = 0$ hyperplane.  Let $x_{\beta} \in U_{\beta}$ such that $\psi(x_{\gamma})\neq 1$, where $[x_{\beta}, x_{\alpha}] = x_{\delta} x_{\gamma}$ for some $x_{\gamma} \in U_{\gamma}, x_{\delta} \in U_{\delta}$.  We compute
\[
f(x_{\alpha} w y_{\alpha}) = \psi(x_{\beta}) f(x_{\alpha} w y_{\alpha}) = f(x_{\delta} x_{\gamma} x_{\alpha} x_{\beta} w y_{\alpha})
\]
\[
\psi(x_{\gamma}) f(x_{\alpha} w w^{-1} x_{\beta} w y_{\alpha})
\]
But $w^{-1} x_{\beta} w \in U_{\delta} \subset K^+$, and since $U_{\alpha}$ commutes with $U_{\delta}$ and $\psi$ is trivial on $U_{\delta}$, we get
\[
\psi(x_{\gamma}) f(x_{\alpha} w w^{-1} x_{\beta} w y_{\alpha}) = \psi(x_{\gamma}) f(x_{\alpha} w y_{\alpha}),
\]
which implies that $f(x_{\alpha} w y_{\alpha}) = 0$, which finishes the proof of Lemma \ref{fine_cosets2}.
\end{proof}

Combining with Lemma \ref{fine_cosets} and Lemma \ref{fine_cosets2}, we are now finished with the proof of proposition \ref{weaksupport}.
\end{proof}

\subsection{A length function}

Let $q$ be the order of $R/\pi R$.
We define $\ell:  N_{\psi}\rightarrow \mathbb Z$ so that $q^{\ell(n)}$ is the number
of $K^+$ cosets in $K^+ n K^+$:
\[
 K^+ n K^+ =\cup_{j=1}^{q^{\ell(n)}} x_j K^+.
\]
Note that $q^{\ell(n)}$ is also equal to the number of $K^+\cap nK ^+n^{-1}$ cosets
in $K^+$. More precisely, we can write $x_j=y_j n$ where $y_j\in K^+$ such that
\[
 K^+ =\cup_{j=1}^{q^{\ell(n)}} y_j (K^+ \cap nK^+ n^{-1}).
\]
 Clearly, the function  $\ell$  descends to
 the group $W_{\psi}$. The group $W_{\psi}$  acts transitively on $\frac12\times \frac12$ squares contained in the
 diagonal strip $S$. Let $w\in W_{\psi}$.
  Let $\rho_0=\rho, \rho_1 , \ldots  ,\rho_m=w(\rho)$
 be a gallery between $\rho$ and $w(\rho)$ (where $\rho$ is as in section \ref{someopencompact}).  The number $m$ is called the length of the
 gallery.

 \begin{prop} \label{length}
 Let $w\in W_{\psi}$. Then $\ell(w)=3m$ where $m$ is the length of the gallery from
 $\rho$ to $w(\rho)$.
 \end{prop}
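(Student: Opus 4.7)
The plan is to adapt the argument of the analogous proposition proved in the $SL(2)\times GL(1)$ case. Let $n \in N_{\psi}$ represent $w \in W_{\psi}$. By definition $q^{\ell(w)} = [K^+ : K^+ \cap n K^+ n^{-1}]$, and exactly as in the horizontal-strip setting one has $U_\gamma \subseteq K^+ \cap n K^+ n^{-1}$ if and only if $\gamma(p) > 0$ and $\gamma(wp) > 0$, equivalently (by linearity of $\gamma$) if and only if $\gamma$ is positive on every $\frac{1}{2} \times \frac{1}{2}$ square in any minimal gallery from $\rho$ to $w(\rho)$. Thus $\ell(w)$ equals the number of affine roots positive on $\rho$ but not positive on the whole gallery, and the proposition reduces to showing that each simple gallery step contributes exactly three such flipped roots.

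The key new feature compared with the horizontal-strip case is the identification of the simple reflections of $W_{\psi}$. Since $W_\psi$ preserves the diagonal strip $S = \{0 < \eta < 1\}$, the relevant simple reflections turn out not to be ordinary wall reflections but rather the two $180^{\circ}$ point reflections $r_1 \colon v \mapsto (1 - v_1, -v_2)$ and $r_2 \colon v \mapsto (-v_1, 1 - v_2)$ about the vertices $(1/2, 0)$ and $(0, 1/2)$ of $\rho$ (both lying on the median line $\eta = 1/2$ of $S$). The coordinate-swap $s_\alpha \in W_\psi$ fixes $\rho$ setwise and hence does not contribute to the gallery length.

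To verify the three-root count for $r_1$, I would use the identity $\gamma(r_1 p) = 2\gamma(c) - \gamma(p)$ valid for any affine functional $\gamma$, where $c = (1/2, 0)$ is the rotation center. A direct enumeration (using $p = (1/4, 1/4)$) shows that the affine roots $\gamma$ with $\gamma(p) > 0$ and $\gamma(r_1 p) \leq 0$ are exactly three: the two roots $\beta$ and $1 - \delta$, whose hyperplanes pass through $c$, together with the single ``outer'' root $1 - \alpha$ for which $(1 - \alpha)(r_1 p) = 0$. The analogous count for $r_2$ follows by the $s_\alpha$-symmetry, with flipped roots $\delta$, $1 - \beta$, $1 + \alpha$. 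Since along a minimal gallery no root that has flipped can flip back, the cumulative count over the $m$ steps is $3m$, yielding $\ell(w) = 3m$.

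The main obstacle is the correct identification of the simple reflections of $W_\psi$ as point reflections rather than wall reflections; this is precisely the structural difference from the horizontal-strip case and accounts for the coefficient $3$ in place of $2$ (a wall reflection has a single hyperplane of fixed points and yields one inner plus one outer flipped root, while a point reflection has a single fixed point through which two affine hyperplanes pass, yielding two inner plus one outer). Once the correct simple reflections are pinned down, the remaining check is a routine rank-two enumeration.
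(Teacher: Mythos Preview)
Your proposal is correct and follows the same skeleton as the paper's proof: reduce $\ell(w)$ to the count of affine roots positive on $\rho$ but not on $w(\rho)$, then verify that this count equals $3m$. The paper's own proof is extremely terse --- it simply asserts the $3m$ count and illustrates it with the single example $w_{\delta}w_{1-\beta}$ (your $r_2$), listing the three flipped roots $\delta$, $1-\beta$, $1+\alpha$. Your identification of the gallery-advancing moves as the two point reflections $r_1=w_{1-\delta}w_{\beta}$ and $r_2=w_{\delta}w_{1-\beta}$, together with the observation that $w_{\alpha}$ stabilizes $\rho$ and contributes nothing, is exactly the structure the paper is using implicitly (the paper later takes $w_{\delta}w_{1-\beta}$ and $w_{\alpha}$ as its Hecke-algebra generators), and your explicit enumeration for $r_1$ matches the paper's example for $r_2$ under the $w_\alpha$-symmetry. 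Your geometric explanation of the coefficient $3$ versus $2$ (two hyperplanes through the rotation center plus one outer wall, as opposed to one fixed wall plus one outer wall) is a genuine clarification that the paper does not spell out.
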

 \begin{proof}
 Assume that $w$ corresponds to $n\in N_{\psi}$. Then  $U_{\gamma}\subseteq n K^+ n^{-1}\cap K^+$ if
 and only if $\gamma$ is positive on both $\rho$ and $w(\rho)$ or, equivalently, $\rho$ is positive on the
 whole gallery. The proposition follows from the fact that the number of $\gamma$  positive on
 $\rho$ but not positive on the whole gallery is $3m$.
 \end{proof}

For example, if $n$ corresponds to the reflection $w_{\delta} w_{1-\beta}$ then $\delta, 1-\beta$, and $1 + \alpha$ are the three affine roots that are positive on $\rho$ but not positive on $w_{\delta} w_{1-\beta}(\rho)$.
Thus
\[
K^+ n K^+ = \cup_{u,v,s \in R/\pi R} x_{\delta}(u)x_{1-\beta}(v) x_{1+\alpha}(s) n K^+.
\]

\begin{prop} \label{product} Let $n_1,n_2\in N_{\psi}$ correspond to
$w_{\delta} w_{1-\beta} \in W_{\psi}$. Let $n_3\in N_{\psi}$ such
that
\[  n_3  \in K^+ n_1 K^+ n_2 K^+. \]
Then $n_3$ corresponds to the trivial element in $W_{\psi}$, to $w_{\delta}$, to $w_{1-\beta}$, or to $w_{\delta} w_{1-\beta}$.
\end{prop}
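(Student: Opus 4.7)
The strategy will closely parallel the proof of the analogous Proposition from the $SL(2) \times GL(1)$ setting, but with the simple reflection $w_{\delta}$ there replaced by the length-$2$ Weyl element $w_{\delta} w_{1-\beta}$. The key structural observation is that $w_{\delta}$ and $w_{1-\beta}$ commute, since they are reflections through the perpendicular hyperplanes $\delta = 0$ and $1-\beta = 0$ that meet at the point $(0, 1/2)$; consequently $\{1, w_{\delta}, w_{1-\beta}, w_{\delta} w_{1-\beta}\}$ is a Klein four subgroup of the affine Weyl group, and $\ell(w_{\delta} w_{1-\beta}) = 2$ with respect to any Iwahori for which $w_{\delta}$ and $w_{1-\beta}$ are simple reflections.

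I would first introduce the Iwahori subgroup $I'$ attached to the alcove $\sigma' = \{(x_1,x_2) : 0 < x_1 < x_2 < 1/2\}$, whose walls are the hyperplanes $\alpha = 0$, $\delta = 0$, and $1 - \beta = 0$, and which has $(0, 1/2)$ as a vertex. In the Coxeter system attached to $I'$, the simple reflections are precisely $w_{\alpha}, w_{\delta}, w_{1-\beta}$. The next step is to verify $K^+ \subset I'$: one shows that every affine root $\xi$ with $\xi(p) \geq 1/2$ is strictly positive on $\sigma'$, which is a finite case check on the linear part of $\xi$ using $p = (1/4,1/4)$.

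With these preparations, the standard Iwahori--Matsumoto relations in the Hecke algebra of $(G(F), I')$ yield, after a routine computation using the quadratic relation $T_s^2 = q + (q-1) T_s$ for each simple $s$ and the fact that $w_{\delta} w_{1-\beta}$ is a reduced expression, the identity
\[
T_{w_{\delta} w_{1-\beta}}^{\,2} \;=\; q^{2}\, T_{1} + q(q-1)\bigl(T_{w_{\delta}} + T_{w_{1-\beta}}\bigr) + (q-1)^{2}\, T_{w_{\delta} w_{1-\beta}}.
\]
Translated back to double cosets, this becomes
\[
I' w_{\delta} w_{1-\beta} I' w_{\delta} w_{1-\beta} I' \;=\; I' \,\cup\, I' w_{\delta} I' \,\cup\, I' w_{1-\beta} I' \,\cup\, I' w_{\delta} w_{1-\beta} I'.
\]
Because $K^+ \subset I'$ and both $n_1, n_2$ lie in $I' w_{\delta} w_{1-\beta} I'$, we obtain $K^+ n_1 K^+ n_2 K^+ \subset I' w_{\delta} w_{1-\beta} I' w_{\delta} w_{1-\beta} I'$, so $n_3$ lies in exactly one of the four Iwahori double cosets above. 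The Bruhat decomposition for $I'$ then identifies the affine Weyl group element represented by $n_3$ with one of $\{1, w_{\delta}, w_{1-\beta}, w_{\delta} w_{1-\beta}\}$, as claimed.

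The only real technical point is the containment $K^+ \subset I'$; beyond this, the proof reduces to a standard computation with the Iwahori--Matsumoto relations in a rank-three Iwahori Hecke algebra, exploiting the commutation of the two reflections $w_{\delta}$ and $w_{1-\beta}$.
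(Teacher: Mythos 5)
Your argument is correct and is essentially the paper's: both proofs pass to the Iwahori subgroup $I'$ attached to $\sigma'$ (using $K^+ \subset I'$ and $n_1,n_2 \in I' w_{\delta} w_{1-\beta} I'$), establish the decomposition $I' w_{\delta} w_{1-\beta} I' w_{\delta} w_{1-\beta} I' = I' \cup I' w_{\delta} I' \cup I' w_{1-\beta} I' \cup I' w_{\delta} w_{1-\beta} I'$, and conclude from the disjointness of Iwahori double cosets. The only difference is in how that decomposition is obtained: the paper computes it directly from the Iwahori factorization $I' w_{\delta} w_{1-\beta} I' = I' w_{\delta} w_{1-\beta} U_{\delta} U_{1-\beta}$, the identity $(w_{\delta} w_{1-\beta})^2 = 1$, and the relation $w_{\gamma}(u) = x_{\gamma}(u) x_{-\gamma}(-u^{-1}) x_{\gamma}(u)$, whereas you invoke the Iwahori--Matsumoto quadratic relations for the commuting simple reflections $w_{\delta}, w_{1-\beta}$ of $I'$ --- the same content packaged as a standard Hecke-algebra fact.
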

\begin{proof}
Let $I'$ be the Iwahori subgroup corresponding to the alcove $\sigma'$ (where $\sigma'$ is as in section \ref{someopencompact}). We claim that
\[
 K^+ n_1 K^+ n_2 K^+\subset I'w_{\delta} w_{1-\beta} I' w_{\delta} w_{1-\beta} I'=  I' \cup I' w_{\delta} I' \cup I' w_{1-\beta} I' \cup I' w_{\delta} w_{1-\beta} I'
\]
The first inclusion is trivial.  To see that $I'w_{\delta} w_{1-\beta} I' w_{\delta} w_{1-\beta} I'=  I' \cup I' w_{\delta} I' \cup I' w_{1-\beta} I' \cup I' w_{\delta} w_{1-\beta} I'$, first note that $I' w_{\delta} w_{1-\beta} I' = I' w_{\delta} w_{1-\beta} U_{\delta} U_{1-\beta}$. Therefore,
\[
I'w_{\delta} w_{1-\beta} I' w_{\delta} w_{1-\beta} I' = I' w_{\delta} w_{1-\beta} U_{\delta} U_{1-\beta} w_{\delta} w_{1-\beta} I' = I' U_{-\delta} U_{\beta-1} w_{\delta} w_{1-\beta} w_{\delta} w_{1-\beta} I'
\]
But $w_{\delta} w_{1-\beta} w_{\delta} w_{1-\beta} = 1$, so we have
\[
I'w_{\delta} w_{1-\beta} I' w_{\delta} w_{1-\beta} I' = I' U_{-\delta} U_{\beta-1} I'
\]
Since $U_{\delta}, U_{1-\beta} \subset I'$, and from the relation $w_{\gamma}(u) = x_{\gamma}(u) x_{-\gamma}(-u^{-1}) x_{\gamma}(u)$ for any affine root $\gamma$, we have
\[
I' U_{-\delta} U_{\beta-1} I' = I' \cup I' w_{\delta} I' \cup I' w_{1-\beta} I' \cup I' w_{\delta} w_{1-\beta} I'
\]
The proposition now follows.
\end{proof}

 \subsection{Strong Hecke algebra}

Let $T_{\alpha} \cong R^{\times}$ be the image of $R^{\times}$ under the homomorphism $h_{\alpha}$. A character $\mu$ of
$T_{\alpha} / (T_{\alpha} \cap T(R)^+)$ is invariant under the conjugation of $N_{\psi}$ if and only if it is
quadratic.  Assume that $\mu$ is quadratic.
  Then $\psi$ and $\mu$ combine to define a character,
 denoted by $\chi$, of $K=T_{\alpha} K^+$. Let $H_{\chi}\subseteq H_{\psi}$ be the subalgebra of
$(K,\chi)$-biinvariant functions. The support of this algebra is $KN_{\psi} K$.

We normalize the measure on $G$ so that the volume of $K$ is 1.  Then the unit of $H_{\chi}$ is
$1_{\chi}$, the function supported on $K$ such that $1_{\chi}(1)=1$.
Let $f_{n}$ and $f_{\alpha}$ be in $H_{\chi}$ supported on the cosets of
$n=w_{\delta} w_{1-\beta} = w_{\delta}(-1) w_{1-\beta}(-1)$ and $w_{\alpha}=w_{\alpha}(-1)$,
respectively, normalized so that $f_{n}(n)=1$ and $f_{\alpha}(w_{\alpha}) = 1$.

Suppose $f_n$ is supported on the double coset $Kn K$.

\begin{prop} \begin{enumerate}
\item Assume that $\mu$ is not trivial.
 Then $$f_{n} * f_{n} = \mu(h_{1-\beta}(-1) h_{\delta}(-1)) q^3 1_{\chi}.$$
 \item  Assume that $\mu$ is trivial.
 Then $$f_{n} * f_{n} =  q^3 1_{\chi} + q(q-1) f_n.$$
 \end{enumerate}

\end{prop}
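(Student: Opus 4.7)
The plan is to follow the strategy of the analogous proposition in Section 4.4: reduce the support of $f_n * f_n$ to the two-dimensional subspace $\mathbb{C} \cdot 1_\chi + \mathbb{C} \cdot f_n$, then evaluate the convolution at $1$ and at $n$. By Proposition \ref{product}, $f_n * f_n$ is supported on $K \cup K w_\delta K \cup K w_{1-\beta} K \cup K n K$. Neither $w_\delta$ (reflection through $e_1 = 0$) nor $w_{1-\beta}$ (reflection through $e_2 = 1/2$) preserves the diagonal strip $S : 0 < \eta < 1$, so neither lies in $W_\psi$; hence the corresponding double cosets support no nonzero element of $H_\chi$, and $f_n * f_n = c_1 \cdot 1_\chi + c_2 \cdot f_n$. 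It therefore suffices to evaluate at $1$ and at $n$.

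For $c_1 = (f_n * f_n)(1)$: by Proposition \ref{length} and the paragraph preceding Proposition \ref{product}, $K n K$ decomposes as a disjoint union of $q^3$ left $K$-cosets $x_j K = y_j n K$ with $y_j = x_\delta(u) x_{1-\beta}(v) x_{1+\alpha}(s)$, $(u,v,s) \in (R/\pi R)^3$. The character $\psi$ is trivial on $U_\delta$ and $U_{1-\beta}$ by construction, and on $U_{1+\alpha}$ since $U_{1+\alpha} \subset K^{++}$ (as $(1+\alpha)(p) = 1 > 1/2$). Thus $\psi(y_j) = 1$ for all $j$, so
\[
(f_n * f_n)(1) = \sum_j f_n(x_j) f_n(x_j^{-1}) = q^3 f_n(n^{-1}).
\]
A direct computation in $N$, using $n_\gamma(-1)^{-1} = n_\gamma(-1) h_\gamma(-1)$ and the commutation of the $SL_2$-subgroups $K_\delta$ and $K_{1-\beta}$ in $Sp(4)$ (as $\delta$ and $\beta$ are orthogonal long roots of $C_2$), yields $n^{-1} = n \cdot h_{1-\beta}(-1) h_\delta(-1)$. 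The matrix identity $h_{1-\beta}(-1) h_\delta(-1) = h_\alpha(-1) \in T_\alpha \subset K$ then gives $f_n(n^{-1}) = \chi(h_\alpha(-1)) = \mu(h_{1-\beta}(-1) h_\delta(-1))$, so $c_1 = \mu(h_{1-\beta}(-1) h_\delta(-1)) q^3$ in both cases.

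For $c_2 = (f_n * f_n)(n)$: using $\psi(y_j) = 1$ again,
\[
(f_n * f_n)(n) = \sum_{u,v,s \in R/\pi R} f_n(n^{-1} y_j^{-1} n).
\]
Conjugation by $n$ sends $U_\delta, U_{1-\beta}, U_{1+\alpha}$ to $U_{-\delta}, U_{\beta-1}, U_{-\alpha}$ respectively, so modulo commutators in $K^{++}$ each summand becomes $f_n(x_{-\delta}(u') x_{\beta-1}(v') x_{-\alpha}(s'))$ for parameters $(u',v',s')$ related to $(u,v,s)$ by fixed nonzero scalars (and these three root subgroups mutually commute in $Sp(4)$). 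I would then carry out a case analysis on the zero/nonzero pattern of $(u',v',s')$, using the rank-one Bruhat identity $x_{-\gamma}(t) = x_\gamma(t^{-1}) h_\gamma(t^{-1}) n_\gamma(-1) x_\gamma(t^{-1})$ applied to $\gamma \in \{\delta, 1-\beta, \alpha\}$, together with the crucial identity $h_\delta(t) h_{1-\beta}(t) = h_\alpha(t)$ in $Sp(4)$ (which says the ``$\delta$'' and ``$1-\beta$'' torus shifts combine into a shift in $T_\alpha$) and the Chevalley commutator relations for moving $x_{-\alpha}(s')$ past the other factors. The surviving contributions assemble, after accounting for all torus shifts and $\psi$-factors from commutators landing in $U_\eta, U_\beta, U_{1-\eta}, U_{1-\delta}$, into a product of character sums in $\mu$ over $T_\alpha$ and in $\psi$ over $s'$. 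Orthogonality of quadratic characters forces $c_2 = 0$ when $\mu$ is nontrivial (case (1)), while in the trivial case the sums collapse by direct counting to $c_2 = q(q-1)$ (case (2)).

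The main obstacle is the last computation: one must carefully enumerate the configurations of $(u',v',s')$ yielding elements of $K n K$, track the torus shifts and commutator contributions through the Bruhat-Iwahori combinatorics (which may involve length-reducing compositions bringing the product back into the $K n K$-cell), and verify that the resulting character sum evaluates exactly to $q(q-1)$ in the trivial case. This is the direct analogue of the case analysis in the proof of the corresponding proposition in Section 4.4, made substantially more intricate by the presence of three summation parameters and by the fact that $n = w_\delta w_{1-\beta}$ is a product of two commuting reflections rather than a single reflection.
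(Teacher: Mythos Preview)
Your approach is exactly the paper's: reduce the support via Proposition~\ref{product} and the computation of $\supp H_\chi$, then evaluate at $1$ and at $n$ using the left-coset decomposition of $KnK$. For the case analysis you correctly flag as the main obstacle, the paper's execution goes as follows. The split is on whether $u$ and $v$ (not $s$) vanish; whenever one of $u,v$ is zero the summand is $0$. When $u,v \in (R/\pi R)^\times$, successive rank-one Bruhat moves for $\delta$ and then $1-\beta$, together with commuting $x_{-\alpha}(s)$ through, reduce the summand to $f_n\bigl(h_\delta(u^{-1}) h_{1-\beta}(v^{-1})\, n\bigr)\cdot \chi\bigl(x_{1-\gamma}(v^{-1}s)\, x_\gamma(-s u^{-1})\bigr)$ (here $\gamma = \eta$). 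The key membership criterion---equivalent to your identity $h_\delta(t) h_{1-\beta}(t) = h_\alpha(t)$---is that $h_\delta(u^{-1}) h_{1-\beta}(v^{-1})\, n \in KnK$ iff $u = v$, since $K = T_\alpha K^+$. One subtlety your sketch does not isolate: once $u=v$, the residual $\psi$-factor $\chi\bigl(x_{1-\gamma}(v^{-1}s)\, x_\gamma(-v^{-1}s)\bigr)$ is not obviously trivial; the paper observes that the normalization $f_n(n)=1$ forces the compatibility $\chi(x_\gamma(a)) = \chi(x_{1-\gamma}(a))$, which is what makes this factor equal to $1$. The sum then collapses to $\sum_{v\in (R/\pi R)^\times}\sum_{s\in R/\pi R} \mu(v)$, yielding $0$ for $\mu$ nontrivial and $q(q-1)$ for $\mu$ trivial.
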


\proof
Since $f_{n} * f_{n}$ is supported on $K n K n K$,
Proposition \ref{product} implies that $f_{n} * f_{n}$  is a linear
combination of $1_{\chi}$, $f_{\delta}$, $f_{1-\beta}$, and $f_n$, where $f_{\delta} \in H_{\chi}$ is supported on $w_{\delta}$ and $f_{1-\beta} \in H_{\chi}$ is supported on $w_{1-\beta}$.  But from what we proved about the support of $H_{\chi}$, $Kw_{\delta}K$ and $Kw_{1-\beta}K$ are not in the support of $H_{\chi}$, and therefore $f_n * f_n$ is really a linear combination of $1_{\chi}$ and $f_n$.  Thus it suffices to evaluate the
convolution at $1$ and $n$.

Write $K n K = \displaystyle\cup_j x_j K$.
A simple computation shows that
\[
(f_{n} * f_{n})(x) = \int_{K n K} f_{n}(y) f_{n}(y^{-1} x)dy = \sum_j f_{n}(x_j ) f_{n}(x_j^{-1} x).
\]

Now suppose $x = 1$.  Recall that $x_j=y_j n$ for some $y_j \in K$, for every $j$. Then
\[
f_{n}(x_j) f_{n}(x_j^{-1})=\psi(y_j)f_{n}(n)
f_{n}(n^{-1})\psi(y_j^{-1})=
f_{n}(n) f_{n}(n^{-1})
\]
for every $j$. Thus
\[
(f_{n} * f_{n})(1) =
\sum_j  f_{n}(x_j) f_{n}(x_j^{-1}) = q^3 f_{n}(n) f_{n}(n^{-1})
\]
One can calculate that $n^{-1} = n h_{1-\beta}(-1) h_{\delta}(-1)$.  Therefore, normalizing $f_n$ such that $f_n(n) = 1$, it follows that
$f_{n}(n^{-1})= \mu(h_{1-\beta}(-1) h_{\delta}(-1))$ and $(f_{n} * f_{n})(1)=  \mu(h_{1-\beta}(-1) h_{\delta}(-1)) q^3$.

Now suppose $x = n$. Recall that $y_j=x_{\delta}(u)\cdot x_{1-\beta}(v)\cdot x_{1+\alpha}(s)$, where $u,v,s \in R /\pi R$.  Then
\[
(f_n * f_n)(n) = \sum_j f_n(x_j) f_n(x_j^{-1} n) =
 \sum_j f_n(n^{-1} y_j^{-1} n)
 \]
  since $f_n(x_j)= f_n(y_j n)=f_n(n)=1$ for all $j$.
  Since $ n^{-1} U_{\delta} n=U_{-\delta}$ and $ n^{-1} U_{1-\beta} n=U_{\beta-1}$ and $n^{-1} U_{1+\alpha} n = U_{-\alpha}$,

\[
(f_n * f_n)(n) =
 \sum_{u,v,s \in R/\pi R} f_n(x_{-\delta}(u) x_{\beta-1}(v) x_{-\alpha}(s)).
 \]

We need to analyze a few cases:

1): Suppose $u = v = 0$.  Then $f_n(x_{-\delta}(u) x_{\beta-1}(v) x_{-\alpha}(s)) = 0$.

2): Suppose $u = 0, v \neq 0$. Then $f_n(x_{-\delta}(u) x_{\beta-1}(v) x_{-\alpha}(s)) = 0$.

3): Suppose $u \neq 0, v = 0$. Then $f_n(x_{-\delta}(u) x_{\beta-1}(v) x_{-\alpha}(s)) = 0$.

4): Suppose $u \neq 0, v \neq 0$.  Recall that $n = w_{\delta}(-1) w_{1-\beta}(-1)$.  We normalize $f_n$ such that $f_n(n) = 1$.

Then we compute, using commutator relations:

\[f_n(x_{-\delta}(u) x_{\beta-1}(v) x_{-\alpha}(s))  = f_n(x_{\delta}(-u^{-1}) x_{-\delta}(u) x_{\beta-1}(v) x_{-\alpha}(s) x_{\delta}(-u^{-1}))
\]

\[= f_n(x_{\delta}(-u^{-1}) x_{-\delta}(u) x_{\beta-1}(v) x_{\delta}(-u^{-1}) x_{\beta}(s^2 u^{-1}) x_{\gamma}(-s u^{-1})) = f_n(w_{\delta}(-u^{-1}) x_{\beta-1}(v) x_{-\alpha}(s) x_{\gamma}(-s u^{-1}))
\]

\[= f_n(x_{1-\beta}(-v^{-1}) w_{\delta}(-u^{-1}) x_{\beta-1}(v) x_{-\alpha}(s) x_{\gamma}(-s u^{-1}) x_{1-\beta}(-v^{-1}))
\]

\[= f_n(x_{1-\beta}(-v^{-1}) w_{\delta}(-u^{-1}) x_{\beta-1}(v) x_{-\alpha}(s) x_{1-\beta}(-v^{-1}) x_{\gamma}(-s u^{-1}) x_{\alpha+1}(v') x_{\delta+1}(v''))
\]

for some $v', v'' \in (R \setminus \pi R) \cup \{0 \}$.  Then

\[f_n(x_{1-\beta}(-v^{-1}) w_{\delta}(-u^{-1}) x_{\beta-1}(v) x_{-\alpha}(s) x_{1-\beta}(-v^{-1}) x_{\gamma}(-s u^{-1}) x_{\alpha+1}(v') x_{\delta+1}(v''))
\]

\[= f_n(x_{1-\beta}(-v^{-1}) w_{\delta}(-u^{-1}) x_{\beta-1}(v) x_{-\alpha}(s) x_{1-\beta}(-v^{-1}) x_{\gamma}(-s u^{-1}) )
\]

\[ = f_n(x_{1-\beta}(-v^{-1}) w_{\delta}(-u^{-1}) x_{\beta-1}(v) x_{1-\beta}(-v^{-1}) x_{-\alpha}(s) x_{1-\gamma}(v^{-1} s) x_{\gamma}(-s u^{-1}) )
\]

\[ = f_n (w_{\delta}(-u^{-1}) w_{1-\beta}(-v^{-1}) x_{-\alpha}(s) x_{1-\gamma}(v^{-1} s) x_{\gamma}(-s u^{-1}))
\]

A computation shows that $w_{\delta}(-u^{-1}) w_{1-\beta}(-v^{-1}) = h_{\delta}(u^{-1}) h_{1-\beta}(v^{-1}) w_{\delta}(-1) w_{1-\beta}(-1)$, so we get

\[ f_n (w_{\delta}(-u^{-1}) w_{1-\beta}(-v^{-1}) x_{-\alpha}(s) x_{1-\gamma}(v^{-1} s) x_{\gamma}(-s u^{-1}))
\]

\[ = f_n (h_{\delta}(u^{-1}) h_{1-\beta}(v^{-1}) w_{\delta}(-1) w_{1-\beta}(-1) x_{-\alpha}(s) x_{1-\gamma}(v^{-1} s) x_{\gamma}(-s u^{-1}))
\]

\[ = f_n (h_{\delta}(u^{-1}) h_{1-\beta}(v^{-1}) n x_{-\alpha}(s)) \chi( x_{1-\gamma}(v^{-1} s) x_{\gamma}(-s u^{-1}))
\]

\[ = f_n (h_{\delta}(u^{-1}) h_{1-\beta}(v^{-1}) n x_{-\alpha}(s) n^{-1} n) \chi( x_{1-\gamma}(v^{-1} s) x_{\gamma}(-s u^{-1}))
\]

\[= f_n (h_{\delta}(u^{-1}) h_{1-\beta}(v^{-1}) x_{\alpha+1}(s') n) \chi( x_{1-\gamma}(v^{-1} s) x_{\gamma}(-s u^{-1}))
\]

\[ = f_n (x_{\alpha+1}(s'') h_{\delta}(u^{-1}) h_{1-\beta}(v^{-1}) n) \chi( x_{1-\gamma}(v^{-1} s) x_{\gamma}(-s u^{-1}))
\]

\[ = f_n (h_{\delta}(u^{-1}) h_{1-\beta}(v^{-1}) n) \chi( x_{1-\gamma}(v^{-1} s) x_{\gamma}(-s u^{-1}))
\]

for some $s', s'' \in (R \setminus \pi R) \cup \{0 \}$.  Now, $h_{\delta}(u^{-1}) h_{1-\beta}(v^{-1}) n \in KnK$ iff $u = v$.  Therefore, we get

\[
f_n (h_{\delta}(u^{-1}) h_{1-\beta}(v^{-1}) n) \chi( x_{1-\gamma}(v^{-1} s) x_{\gamma}(-s u^{-1}))
\]

\[\mu(v) f_n (n) \chi( x_{1-\gamma}(v^{-1} s) x_{\gamma}(-v^{-1} s))
\]

It is a lengthy computation to show that since we have normalized $f_n(n) = 1$, then $\chi(x_{\gamma}(a)) = \chi(x_{1-\gamma}(a))$.  Therefore, we get that $\chi( x_{1-\gamma}(v^{-1} s) x_{\gamma}(-v^{-1} s)) = 1$, so we get

\[
(f_n * f_n)(n) = \sum_{v \in (R / \pi R)^{\times},s \in R/\pi R} \mu(v)
 \]

which equals zero if $\mu$ is nontrivial (since the sum of a nontrivial character over a group vanishes) and equals $q(q-1)$ if $\mu$ is trivial.

\qed

Suppose $f_{\alpha}$ is supported on the double coset $Kw_{\alpha} K$.  Set $w := w_{\alpha} = w_{\alpha}(-1)$.  We compute

\begin{prop}

 $$f_{\alpha} * f_{\alpha} = \mu(h_{\alpha}(-1)) 1_{\chi}.$$

\end{prop}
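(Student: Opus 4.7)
The proof plan hinges on the observation that $w_{\alpha}$ fixes the base point $p=(1/4,1/4)$ because $\alpha(p)=0$. My plan is to use this to show, almost formally, that $f_{\alpha}*f_{\alpha}$ is a scalar multiple of $1_{\chi}$, and then to compute the scalar by a single evaluation at the identity.

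First, since $w_{\alpha}(p)=p$ the reflection $w_{\alpha}$ permutes the affine root groups $U_{\psi}$ with $\psi(p)\geq 1/2$ and thus normalizes every Moy--Prasad subgroup $G(F)_{p,r}$; in particular it normalizes $K^{+}$. Combined with $w_{\alpha}tw_{\alpha}^{-1}=t^{-1}$ for $t\in T_{\alpha}$ (which is precisely why $\mu$ was required to be quadratic), this shows that $w_{\alpha}$ normalizes $K=T_{\alpha}K^{+}$. Hence $Kw_{\alpha}K=Kw_{\alpha}$ is a single left $K$-coset of volume $1$; equivalently, by Proposition \ref{length}, $\ell(w_{\alpha})=0$ since $w_{\alpha}$ fixes the square $\rho$ setwise. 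Moreover, substituting $u=-1$ into $h_{\alpha}(u)=n_{\alpha}(u)n_{\alpha}(-1)$ yields $w_{\alpha}^{2}=h_{\alpha}(-1)\in T_{\alpha}\subset K$, so $Kw_{\alpha}Kw_{\alpha}K=Kw_{\alpha}^{2}K=K$. Therefore $f_{\alpha}*f_{\alpha}$ is supported on $K$ and must be a scalar multiple of $1_{\chi}$.

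It remains to pin down the scalar. I would compute
\[
(f_{\alpha}*f_{\alpha})(1)=\int_{Kw_{\alpha}}f_{\alpha}(y)f_{\alpha}(y^{-1})\,dy.
\]
Writing $y=kw_{\alpha}$ with $k\in K$, biinvariance gives $f_{\alpha}(y)=\chi(k)$. Using $w_{\alpha}^{-1}=h_{\alpha}(-1)w_{\alpha}$ (which follows from $w_{\alpha}^{2}=h_{\alpha}(-1)$ together with $h_{\alpha}(-1)^{2}=1$) one rewrites
\[
y^{-1}=w_{\alpha}^{-1}k^{-1}=h_{\alpha}(-1)\bigl(w_{\alpha}k^{-1}w_{\alpha}^{-1}\bigr)w_{\alpha},
\]
so $f_{\alpha}(y^{-1})=\mu(h_{\alpha}(-1))\,\chi(w_{\alpha}k^{-1}w_{\alpha}^{-1})$. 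Since $Kw_{\alpha}K$ lies in the support of $H_{\chi}$, the character $\chi$ is $w_{\alpha}$-conjugation invariant on $K$, so $\chi(w_{\alpha}k^{-1}w_{\alpha}^{-1})=\chi(k)^{-1}$, and the integrand collapses to the constant $\mu(h_{\alpha}(-1))$. Integrating over the coset of volume $1$ yields $(f_{\alpha}*f_{\alpha})(1)=\mu(h_{\alpha}(-1))$, which is the claimed identity.

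There is no serious obstacle here: the entire argument rests on the geometric fact $\alpha(p)=0$, which forces $w_{\alpha}$ to be a length-zero element of $W_{\psi}$ and reduces the convolution to a trivial integral. This stands in striking contrast with the preceding $f_{n}*f_{n}$ computation, where the reflection $w_{\delta}w_{1-\beta}$ genuinely moved $\rho$ and a long commutator chase was required.
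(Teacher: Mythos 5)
Your proof is correct and follows essentially the same route as the paper's: reduce the support of $f_{\alpha}*f_{\alpha}$ to $K$ using the facts that $w_{\alpha}$ normalizes $K$ and $w_{\alpha}^{2}=h_{\alpha}(-1)\in K$, then evaluate at the identity to obtain $f_{\alpha}(w_{\alpha})f_{\alpha}(w_{\alpha}^{-1})=\mu(h_{\alpha}(-1))$. Your added geometric justification (that $\alpha(p)=0$ forces $w_{\alpha}$ to fix $p$ and hence normalize $K^{+}$, so $Kw_{\alpha}K$ is a single coset) simply makes explicit what the paper asserts without comment, and your use of $w_{\alpha}$-conjugation invariance of $\chi$ is an equivalent reformulation of the paper's direct cancellation $\chi(k)\chi(k)^{-1}$.
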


\proof
Note that $f_{\alpha} * f_{\alpha}$ is supported on $K w_{\alpha} K w_{\alpha} K$.  Moreover, $K w_{\alpha} K w_{\alpha} K = K w_{\alpha} K w_{\alpha}^{-1} w_{\alpha}^2 K = K w_{\alpha}^2 K = K$ since $w_{\alpha} K w_{\alpha}^{-1} = K$.  Thus it suffices to evaluate the
convolution at $1$.

But

\[
(f_{\alpha} * f_{\alpha})(1) = \int_{K wK} f_{\alpha}(y) f_{\alpha}(y^{-1})dy = \int_{w K} f_{\alpha}(y) f_{\alpha}(y^{-1}) dy
\]

\[
= \int_{K} f_{\alpha}(wk) f_{\alpha}((wk)^{-1}) dk = \int_{K} f_{\alpha}(w) \chi(k) \chi(k)^{-1} f_{\alpha}(w^{-1}) dk
\]

\[
= f_{\alpha}(w) f_{\alpha}(w^{-1})
\]

Normalize $f_{\alpha}$ such that $f_{\alpha}(w) = 1$.  We then have $w^{-1} = h_{\alpha}(-1) w$.  So $f_{\alpha}(w^{-1}) = f_{\alpha}(h_{\alpha}(-1) w) = \chi(h_{\alpha}(-1)) f_{\alpha}(w) = \chi(h_{\alpha}(-1))$.  But $h_{\alpha}(-1)$ is in the central part of $K$ on which $\mu$ is defined.  So $\chi(h_{\alpha}(-1)) = \mu(h_{\alpha}(-1))$, which depends on whether $\mu$ is quadratic or not.

Therefore, $f_{\alpha}*f_{\alpha} = \mu(h_{\alpha}(-1)) 1_{\chi}$.

\qed

Suppose now that $\mu$ is trivial.  We define $e_{n}=f_{n}/q$ and $e_{\alpha}=f_{\alpha}$.
In particular, $e_{\alpha}^2 = 1$ and $e_n$ satisfies the relation
\[
e_{n}^2=q + (q-1) e_{n}.
\]

Let $\mathbb H$ be the associative algebra generated by two elements $t_{n}$ and $t_{\alpha}$ satisfying the
same quadratic relation. We have a
homomorphism $\varphi : \mathbb H \rightarrow H_{\chi}$ defined by $\varphi(t_{n})=e_{n}$ and
$\varphi(t_{\alpha})=e_{\alpha}$.
\begin{thm} The map $\varphi$ is an isomorphism of $\mathbb H$ and $H_{\psi}$.
\end{thm}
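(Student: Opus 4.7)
The plan is to mirror the strategy of Theorem \ref{volumesmultiply}, using the volume-multiplicativity trick proved there to show that $\varphi$ carries a basis of $\mathbb{H}$ to a basis of $H_\chi$. The set-up is formally parallel: we have an abstract algebra generated by two elements with prescribed quadratic relations, and an explicit pair $e_n, e_\alpha \in H_\chi$ satisfying those same relations, so it suffices to show $\varphi(t_w)$ is nonzero and supported on a single double coset $K n_w K$ for each $w$ in the dihedral group $W_\psi$ generated by $w_n = w_\delta w_{1-\beta}$ and $w_\alpha$.

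First, I would record that any element $w \in W_\psi$ admits a reduced word alternating in $w_n$ and $w_\alpha$. Using only the quadratic relations defining $\mathbb{H}$, the monomials $t_{s_1}t_{s_2}\cdots t_{s_k}$ over all reduced expressions span $\mathbb{H}$, and their number is at most $|W_\psi|$. This gives the upper bound on $\dim \mathbb{H}$; the linear independence will come from $\varphi$ itself once we prove the support claim.

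Next, I would verify the volume multiplicativity hypothesis. By Proposition \ref{length} of the current section, $\ell(w) = 3m$ where $m$ is the gallery length from $\rho$ to $w(\rho)$ in the diagonal strip $S$, and galleries concatenate for reduced expressions, so $m$ (and hence $\ell$) is additive on reduced factorizations in the dihedral group generated by $w_n$ and $w_\alpha$. The generator $w_\alpha$ needs separate treatment: the computation preceding this theorem showed $w_\alpha K w_\alpha^{-1} = K$, so $K w_\alpha K = w_\alpha K$ is a single left coset, giving $\ell(w_\alpha) = 0$. This is geometrically consistent because $w_\alpha$ fixes the point $p = (1/4,1/4)$ and preserves $\rho$, contributing zero to gallery length. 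Consequently, for any reduced expression $w = s_1 s_2 \cdots s_k$ in $W_\psi$, we have $vol(K n_w K) = \prod_i vol(K s_i K)$.

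Third, I would apply the volume multiplicativity argument exactly as in the proof of Theorem \ref{volumesmultiply}: if $f,g \in H_\chi$ are nonzero functions supported on $KaK$ and $KbK$ with $vol(KaK)\cdot vol(KbK) = vol(KabK)$, then $f*g$ is nonzero and supported on the single double coset $KabK$. Applying this inductively to $\varphi(t_w) = \varphi(t_{s_1}) * \cdots * \varphi(t_{s_k})$, we conclude $\varphi(t_w)$ is nonzero and supported on $K n_w K$. Since distinct $w \in W_\psi$ give distinct double cosets, the family $\{\varphi(t_w)\}$ is linearly independent. As $H_\chi$ has basis $\{f_w\}_{w \in W_\psi}$ indexed by the $K$-double cosets in $K N_\psi K$, we get that $\varphi$ sends a basis to a basis, hence is an isomorphism.

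The main obstacle is justifying the gallery-length additivity in the dihedral word, in particular handling the length-zero generator $w_\alpha$: one must check that inserting an $e_\alpha$ factor between $e_n$ factors still yields a product supported on exactly one double coset and that no "collapse" occurs via the quadratic relation $e_\alpha^2 = 1$ in a non-reduced word. The relation $w_\alpha K w_\alpha^{-1} = K$ together with the alternation forced by reducedness in the infinite dihedral group should resolve this cleanly, but it is the step that requires the most care.
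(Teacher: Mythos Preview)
Your proposal is correct and follows essentially the same approach as the paper: the paper's proof consists of a single sentence invoking Theorem \ref{volumesmultiply} together with Proposition \ref{length} to conclude that $\varphi(t_w)$ is nonzero and supported on one double coset, exactly as you outline. Your treatment is in fact more careful than the paper's, since you explicitly address the length-zero generator $w_\alpha$ (via $w_\alpha K w_\alpha^{-1} = K$ and $w_\alpha(\rho)=\rho$), a point the paper leaves implicit.
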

\begin{proof}
As in the proof of Theorem \ref{volumesmultiply}, it follows from Proposition \ref{length} that  $\varphi(t_{w})$ is non-zero  and supported on one
double coset. Thus $\varphi$ sends the basis of $\mathbb H$ to a basis of $H_{\psi}$.
\end{proof}

\end{document}